\newtheorem{theorem}{Theorem}[section]
\newtheorem{corollary}[theorem]{Corollary}
\newtheorem{lemma}[theorem]{Lemma}
\newtheorem{definition}[theorem]{Definition}
\newtheorem{remark}[theorem]{Remark}
\theoremstyle{definition}
\numberwithin{equation}{section}
\begin{document}
\title[Spectral characterization of Q-P-D functions on the real line]{Spectral characterization of quaternionic positive definite functions on the real line}

\author[Z. P. Zhu]{Zeping Zhu}
\email{zzp$\symbol{64}$mail.ustc.edu.cn}

\address[Z. P. Zhu]{Department of Mathematics, Chongqing Normal University, Chongqing 401331, China}

\begin{abstract}This paper is concerned with the spectral characteristics of quaternionic positive definite functions on the real line. We generalize the Stone's theorem to the case of a right quaternionic linear one-parameter unitary group via two different types of functional calculus. From the generalized Stone's theorems we obtain a correspondence   between  continuous quaternionic positive definite functions and spectral systems, i.e., unions of a spectral measure and a unitary anti-self-adjoint operator that commute with each other; and then deduce that the Fourier transform of a continuous quaternionic positive definite function is an unusual type of quaternion-valued measure which can be described equivalently in two different ways. One is related to spectral systems on $\mathbb{R}^+$(induced by the first generalized Stone's theorem), the other is related to non-negative finite Borel measures on $\mathbb{R}^3$(induced by the second generalized Stone's theorem).   An application to weakly stationary quaternionic random processes is also presented. \end{abstract}

\thanks{This work was supported by National Science Foundation for Youth (NSFY) of China (No. 12101094).}

\subjclass{43A35, 47D03, 47A60, 47S05, 42A38}

\keywords{Spectral representation, Positive definite function, Unitary group, Quaternionic operator calculus, Random process}

\maketitle

\section{Introduction}

The notion of positive definiteness is an important and basic notion in mathematics, which occurs in a variety of algebraic settings. At present there exists a rather satisfactory theory of complex-valued positive definite functions on abelian semigroups. The further development  splits mainly into two orientations: the theory of positive definite functions on non-abelian semigroups, and that of positive definite functions taking values in non-commutative algebras. This paper is concerned with the latter. Our interest lies especially in the spectral properties of quaternionic positive definite functions. The first contribution on this topic has been made by D. Alpay, F. Colombo, D. P. Kimsey, and I. Sabadini (see, e.g., \cite{Alpay-2016-1}). In addition, there are some other related contributions on quaternionic positive definiteness (see, e.g., \cite{Alpay-2016-3,Alpay-2017,Alpay-2004}).

There are two major difficulties:

Firstly, the conceptual framework in the quaternionic case is incomplete. Many vital concepts in the classical theory of positive definite functions have no proper counterparts in the quaternionic setting. For instance, in the complex case, for every locally compact group $G$, its Pontryagin dual, which is composed of continuous group homomorphisms from $G$ to the unit circle in the complex plane, has a natural group structure given by the ordinary function multiplication, thus it is also called the dual group of $G$; whereas, the analog composed of continuous group homomorphisms from $G$ to the unit sphere in the real quaternion algebra, possesses no natural group structure due to the non-commutative nature of quaternions. To overcome this  difficulty, D. Alpay and his collaborators introduced a special type of quaternion-valued measure, called q-positive measure, on the classical Pontryagin dual with respect to an arbitrary complex slice of the quaternion algebra (see, e.g., \cite{Alpay-2016-1}). Then they established a one-to-one correspondence between quaternionic positive definite functions and this special kind of measure via the Fourier-Stieltjes transform. We would like to mention that our approach is different, and our results are strongly connected with theirs.

Secondly, as is well known, several branches of the functional analysis play crucial roles in the theory of positive definite functions. In contrast to the complex case, the functional analysis in quaternionic vector spaces still remains to be perfected; it causes the absence of some important analytical tools. Recent contributions (see, e.g., \cite{Alpay-2016,Alpay-2016-2,Colombo-2007,Ghiloni-2017,Ghiloni-2013,Ludkovsky-2013}) on the normal operators in quaternionic Hilbert spaces provide us with two types of functional calculus. One is based on the right spectrum, also called S-spectrum, the other is based on the left spectrum. We attempt to apply both the functional calculuses to discuss the spectral characteristics of quaternionic positive definite functions on $\mathbb R$, one of the simplest locally compact abelian groups.

Our strategy is as follows:

First we establish two generalized Stone theorems for  right quaternionic linear one-parameter unitary groups via two different types of functional calculus in quaternionic Hilbert spaces. Explicitly speaking, the generalized Stone theorems state that
every strongly continuous right linear one-parameter unitary group $U(t)$ can be expressed as
$$U(t)=e^{tA}\rvert_S,$$
and
$$U(t)=e^{tA}\rvert_L,$$
for all $t\in\mathbb R$ with $A$ being a normal operator. Here $e^{tA}\rvert_S$ is defined as the S-functional calculus for the function $e^{tx}$ on the S-spectrum of $A$, and $e^{tA}\rvert_L$ is defined as the other type of functional calculus on the left spectrum.

Based on the first generalized Stone theorem we construct a correspondence between  continuous quaternionic positive definite functions and spectral systems, namely, unions of a spectral measure and a unitary anti-self-adjoint operator that commute with each other (for more details, one may refer to Theorem \ref{Thm-Spectral}). It leads to the conclusion that the Fourier transform of a continuous quaternionic positive definite function is a slice-condensed measure, which is an unusual type of quaternionic valued measure related to spectral systems (see Definition \ref{Def-PNM-2}). More precisely, if $\varphi$ is a continuous quaternionic positive definite function on $\mathbb R$, then there exists a unique slice-condensed measure $\mu$ such that
$$\varphi(t)=\int_{\mathbb{R}^+}\cos(tx)d\mathbf{Re}\mu(x)+\int_{\mathbb{R}^+}\sin(tx)d(\mu-\mathbf{Re}\mu)(x), $$
and vice versa. After that we apply the second generalized Stone theorem to show the concept of slice-condensed measure can be defined equivalently (as Definition \ref{Def-PNM-1}) in a more concrete way: A quaternionic regular Borel measure $\mu$ on $\mathbb{R}^+$ is  slice-condensed if and only if there exists a non-negative finite regular Borel measure $\Gamma$ on $\mathbb{H}_I$, namely the 3-dimensional real vector space of pure imaginary quaternions, s.t.,
  $$\mu=\rho_*(\Gamma+\frac{x}{\lvert x\rvert }\Gamma),$$
where $\rho_*$ is the push-forward mapping induced by the function $\rho:x\mapsto \lvert x\rvert , \ x\in\mathbb{H}_I$.

The present paper is organized as follows: Some preliminaries are given in Section \ref{Sec-Pre}. Two generalized  Stone's theorems for right quaternionic linear unitary groups are established in Section \ref{Sec-Stone}. We devote Section \ref{Sec-Bochner} to the spectral characteristics of quaternionic positive definite functions on the real line; especially a generalized Bochner's theorem is established in this section. An application to weakly stationary quaternionic random processes is presented in Section \ref{Sec-App}. Section \ref{Sec-Final} is the final remark.

\section{Preliminaries}\label{Sec-Pre}
We would like to introduce some basic information about two types of functional calculus in quaternionic Hilbert spaces.

Let $\mathbb{H}$ denote the real quaternion algebra
$$\{q=q_0i_0+q_1i_1+q_2i_2+q_3i_3,\ q_i\in\mathbb{R}\ (i=0,1,2,3)\},
$$
where $i_0=1$, $i_3=i_1i_2$, and $i_1,i_2$ are the generators of  $\mathbb H$, subject to the following identities:
 $$i_1^2=i_2^2=-1, \qquad i_1i_2=-i_2i_1. $$
For all $q\in \mathbb{H}$,
its conjugate is defined as $\overline{q}:=q_0i_0-q_1i_1-q_2i_2-q_3i_3$, and its norm given by $|q|:=\sqrt{q_0^2+q_1^2+q_2^2+q_3^2}$.
$\mathbb{S}$ will denote the set of all imaginary units, namely,
$$\mathbb{S}:=\{q=q_0i_0+q_1i_1+q_2i_2+q_3i_3\in\mathbb{H}:q_0=0\text{ and }|q|=1\}. $$
Consider the subalgebra $\mathbb{C}_j$ generated by a imaginary unit $j\in\mathbb{S}$. It can be easily seen that $\mathbb{C}_j$ in fact is a complex field since $j^2=-1$. Let $\mathbb{C}_j^+$ denote  the set of all $p\in\mathbb{C}_j$ with $\mathbf{Im}p\geq 0$, i.e., $$\mathbb{C}_j^+:=\{q=q_0i_0+q_jj\in \mathbb{C}_j: q_0\in\mathbb R,  q_j\geq 0 \}. $$

Let $V$ be a right vector space over $\mathbb{H}$. An inner product on $V$ is a map $\langle\cdot, \cdot\rangle: V\times V \mapsto \mathbb{H}$ with the following properties:
\begin{eqnarray*}
  \langle x, y\rangle&=& \overline{\langle y, x\rangle}, \\
  \langle x+y,z\rangle&=&\langle x,z\rangle+\langle y,z\rangle, \\
  \langle xp, y\rangle&=&\langle x, y\rangle p, \\
  \langle x, yp\rangle&=&\overline{p}\langle x, y\rangle, \\
\end{eqnarray*}
and
$$\langle x, x\rangle \geq 0, \ =0 \text{ if and only if }x=0,
$$
for all $x, y, z\in V$ and $p\in \mathbb{H}$.
If $\langle\cdot, \cdot\rangle$ is an inner product, then $\|x\|=\sqrt{\langle x, x\rangle}$ is norm on $V$. A right vector space $V$ over $\mathbb{H}$ endowed with an inner product which makes $V$ be a complete norm space  is called a quaternionic Hilbert space (see, e.g., \cite{Alpay-2016,Ghiloni-2013}).

Let $\mathcal{H}$ be a quaternionic Hilbert space. The set of all right linear bounded operators on $\mathcal{H}$ will be denoted by $\mathcal{B}(\mathcal{H})$, and the set of all right linear operators on the subspaces of  $\mathcal{H}$ by $\mathcal{L}(\mathcal{H})$. For any operator $T$, the definition domain, the range and the kernel will be denoted by $\mathrm{D}(T)$, $\mathrm{R}(T)$ and $\mathrm{Ker}(T)$ respectively. The concepts of unitary, normal, self-adjoint and anti-self-adjoint operators are defined in the same way as the case that  $\mathcal{H}$ is a real or complex Hilbert space.

\subsection{Functional calculus based on the S-spectrum}

For a densely defined operator $T\in \mathcal{L}(\mathcal{H})$, its S-spectrum (see Definition 2.12. of \cite{Alpay-2016}) is defined as
$$\sigma_S(T):=\mathbb{H}\setminus \rho_S(T), $$
where $\rho_S(T)$ is the S-resolution set of $T$ given by
 \begin{eqnarray*}
    \rho_S(T)&:=& \big\{q\in\mathbb{H}:\mathrm{Ker}(\mathcal{R}_q(T))=0,\ \mathrm{R}(\mathcal{R}_q(T)) \text{ is
                    dense in } \mathcal{H}  \\
             &  &\text{ and } \mathcal{R}_q(T)^{-1}\in \mathcal{B}(\mathcal{H})\big\}
  \end{eqnarray*}
with $\mathcal{R}_q(T):=T^2-2\mathbf{Re}(q)T+|q|^2I$.

A resolution of the identity in a quaternionic Hilbert space is defined as follows.
\begin{definition}\label{Def-right-linear-ROI}
  Let $\mathcal{M}$ be the $\sigma$-algebra of all Borel sets on a locally compact Hausdorff space  $\Omega$, and $\mathcal H$ be a quaternionic Hilbert space. A  resolution of the identity on $\Omega$  is a mapping
  $$E: \mathcal{M}\mapsto\mathcal{B}(\mathcal{H})$$
  with the following properties:

  1. $E(\emptyset)=0$, $E(\Omega)=I$.

  2. $E(\omega)$ is a right linear self-adjoint projection for all $\omega\in\mathcal{M}$.

  3. $E(\omega'\cap \omega'')=E(\omega')E(\omega'')$ holds for all $\omega',\ \omega''\in\mathcal{M}$.

  4. If $\omega'\cap \omega''=\emptyset$, then $E(\omega'\cup \omega'')=E(\omega')+E(\omega'')$.

  5. For every $x, y\in\mathcal{H}$, the set function $E_{x,y}$ defined by
  $$E_{x,y}(\omega)=\langle E(\omega)(x),y\rangle$$
  is a quaternion-valued regular Borel measure on $\Omega$.
\end{definition}

\begin{theorem}\label{Thm-Spetral-Normal}\cite{Alpay-2016}
Let $T$ be a right linear normal operator on a quaternionic Hilbert space $\mathcal{H}$ and $j$ be an imaginary unit in $\mathbb{S}$. There exists a uniquely determined resolution of the identity $E_j$ on $\sigma_S(T)\cap \mathbb{C}_j^+$, such that
$$\langle Tx,y\rangle
    =\int_{\sigma_S(A)\cap \mathbb{C}_j^+}
      \mathbf{Re}(p)d \langle E_jx,y\rangle (p)+
     \int_{\sigma_S(A)\cap \mathbb{C}_j^+}
      \mathbf{Im}(p)d\langle JE_jx,y\rangle(p),
$$
for all $x\in \mathrm{D}(T)$ and all $y\in\mathcal{H}$,
where $J\in\mathcal{B}(\mathcal{H})$ is a unitary  anti-self-adjoint operator associated with $T$.
\end{theorem}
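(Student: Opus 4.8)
The plan is to reduce the quaternionic normal operator $T$ to a genuinely complex situation by exhibiting a canonical complex structure associated with $T$, and then to import the classical complex spectral theorem one slice at a time.

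First I would split $T$ into its self-adjoint and anti-self-adjoint parts, $T_0=\frac12(T+T^*)$ and $T_1=\frac12(T-T^*)$; normality $T^*T=TT^*$ forces $T_0$ and $T_1$ to commute. The operator $T_1$ is anti-self-adjoint, hence normal, so $-T_1^2=T_1^*T_1$ is positive self-adjoint and its square root $|T_1|=(-T_1^2)^{1/2}$ is positive self-adjoint. The polar decomposition $T_1=J|T_1|$ then yields a partial isometry $J$ commuting with $|T_1|$; from $T_1^*=|T_1|J^*$ and $T_1^*=-J|T_1|$ one reads off $J^*=-J$ and $J^2=-I$ on $\overline{\mathrm{R}(T_1)}$. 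Extending $J$ to a unitary anti-self-adjoint operator on all of $\mathcal H$ (with some freedom on $\mathrm{Ker}(T_1)$, which is what the phrase ``a unitary anti-self-adjoint operator associated with $T$'' refers to) gives $T=T_0+J|T_1|$ with $J$ commuting with $T_0$, $|T_1|$, and hence with $T$.

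Next, using the fixed imaginary unit $j$ together with the operator $J$ satisfying $J^2=-I$, I would split $\mathcal H$ into the slice subspaces
$$\mathcal{H}^{+}=\{x\in\mathcal{H}:Jx=xj\},\qquad \mathcal{H}^{-}=\{x\in\mathcal{H}:Jx=-xj\},$$
via the complementary projections $x\mapsto\frac12(x-Jxj)$ and $x\mapsto\frac12(x+Jxj)$. On $\mathcal{H}^{+}$ the right action of $\mathbb{C}_j$ makes it a complex Hilbert space, and since $T$ is right $\mathbb H$-linear and commutes with $J$, it maps $\mathcal{H}^{+}$ into itself and there the operator $J$ coincides with right multiplication by $j$. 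Thus $T|_{\mathcal{H}^{+}}$ acts as the $\mathbb{C}_j$-linear normal operator $T_0+j|T_1|$, and because $|T_1|\ge 0$ its complex spectrum lies in the closed upper half-plane. The classical complex spectral theorem then furnishes a complex resolution of the identity $\tilde E$ supported on that spectrum; I would identify it with $\sigma_S(T)\cap\mathbb{C}_j^{+}$ by comparing the complex resolvent with $\mathcal R_q(T)=T^2-2\mathbf{Re}(q)T+|q|^2I$, using the factorization $\mathcal R_q(T)=(T-qI)(T-\bar qI)$ on the slice.

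Finally I would transport $\tilde E$ back to a quaternionic resolution of the identity $E_j$ on all of $\mathcal H$ by letting $E_j(\omega)$ act as $\tilde E(\omega)$ on $\mathcal{H}^{+}$ and extending it to be right $\mathbb H$-linear and to commute with $J$; verifying conditions 1--5 of Definition \ref{Def-right-linear-ROI} is then routine. Writing $p=\mathbf{Re}(p)+j\,\mathbf{Im}(p)$ and replacing the scalar $j$ by the operator $J$ as one passes from the slice back to $\mathcal H$ converts the complex integral $\int p\,d\tilde E$ into the two quaternionic integrals in the statement, the real part pairing against $E_j$ and the imaginary part acquiring the extra factor $J$. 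Uniqueness follows from uniqueness in the complex spectral theorem together with the canonicity of $|T_1|$ and $J$ on $\overline{\mathrm{R}(T_1)}$. I expect the main obstacle to be exactly the construction of $J$: obtaining a genuinely unitary (not merely partially isometric) anti-self-adjoint operator that commutes with $T_0$ and $|T_1|$ requires care on $\mathrm{Ker}(T_1)$, and this ambiguity is precisely why the theorem asserts the existence of an associated $J$ rather than a unique one. A secondary technical point is matching the classical complex spectrum on the slice with $\sigma_S(T)\cap\mathbb{C}_j^{+}$, which relies on the above factorization and on the circularity of the S-spectrum, so that restricting to $\mathbb{C}_j^{+}$ loses no information.
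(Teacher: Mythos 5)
This theorem is not proved in the paper at all: it is imported verbatim from \cite{Alpay-2016} (it is Theorem 6.2 there), so there is no in-paper argument to compare your proposal against. That said, your sketch reconstructs essentially the proof strategy of the cited source and of the earlier work \cite{Ghiloni-2013} on which it builds: split $T$ into self-adjoint and anti-self-adjoint parts, extract the unitary anti-self-adjoint $J$ from the polar decomposition of the anti-self-adjoint part (with the non-canonical extension over the kernel being exactly why the theorem asserts ``an associated'' $J$ rather than a unique one), pass to the slice subspace $\mathcal{H}^{+}=\{x:Jx=xj\}$, which is a $\mathbb{C}_j$-Hilbert space on which the classical spectral theorem applies, and extend the resulting resolution of the identity back to $\mathcal{H}$ by right $\mathbb{H}$-linearity. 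Two technical points deserve flagging. First, the theorem covers unbounded $T$ (note the quantifier ``for all $x\in\mathrm{D}(T)$''): for densely defined $T$ the naive splitting $T_0=\frac12(T+T^*)$, $T_1=\frac12(T-T^*)$ and the claim that normality ``forces $T_0$ and $T_1$ to commute'' are not routine --- one must handle domains and interpret commutation via resolvents or spectral projections, and \cite{Alpay-2016} in fact reduces the unbounded case to the bounded one through a bounded transform of $T$ rather than by direct algebra on $\mathrm{D}(T)$; your sketch as written is only a complete outline for bounded $T$. Second, extending $J$ over $\mathrm{Ker}(T_1)$ cannot be done by right multiplication by $j$, since $x\mapsto xj$ is not right $\mathbb{H}$-linear; one needs a left scalar multiplication induced by a chosen orthonormal basis of the kernel, the same device the paper recalls in Section 2.2. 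Neither point invalidates your approach, but both are where the real work in the reference lies.
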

$E_j$ is called the spectral measure of $T$. Note that $E_j$ and $J$ commute with each other.

\begin{definition}\label{Def-FCS}
  Let $\mathcal{H}$ be a quaternionic Hilbert space. If  a resolution of the identity $E$ on a locally compact Hausdorff space $\Omega$ commutes with a  unitary  anti-self-adjoint operator $J\in\mathcal{B}(\mathcal{H})$, then $(E,J)$ is called a spectral system on $\Omega$.
\end{definition}

From this point of view, $(E_j,J)$ in Theorem \ref{Thm-Spetral-Normal} (Theorem 6.2  of \cite{Alpay-2016}) is a spectral system on $\sigma_S(A)\cap \mathbb{C}_j^+$.

\begin{definition}\cite{Alpay-2016}
 A  subset $\Omega$ of $\mathbb{H}$ is said to be axially symmetric if it satisfies the following property:
 For an arbitrary element $p_0+p_1j$ $(p_0, p_1\in \mathbb{R},j\in\mathbb{S})$ in $\Omega$,
 $p_0+p_1j'$ also belongs to $\Omega$ for all $j'\in\mathbb{S}$.
\end{definition}

\begin{definition}\cite{Alpay-2016}
  Let $\Omega$ be an  axially symmetric subset of $\mathbb{H}$. Set
  $$D:=\{(u,v)\in\mathbb{R}^2: u+vj\in\Omega \text{ for some }j\in\mathbb{S}\}. $$ A
  function $f:\Omega\mapsto\mathbb{H}$ is called an intrinsic slice function if it can be composed as
  $$f(u+vj)=f_0(u,v)+f_1(u,v)j, \quad \forall \ u,v\in D, \ \forall\ j\in\mathbb{S}
  $$
  where $f_0$ and $f_1$ are both real-valued functions defined on $D$.
\end{definition}

The functional calculus based on the S-spectrum, also called S-functional calculus, is defined as follows.
\begin{definition}\cite{Alpay-2016}\label{Def-FC-S-Spec}
Let $T$ be a right linear normal operator on a quaternionic Hilbert space $\mathcal{H}$, and $(E_j,J)$ be the spectral system that arises in Theorem \ref{Thm-Spetral-Normal}. For any intrinsic slice function $f: \sigma_S(T)\mapsto\mathbb{H}$ with the real component $\mathbf{Re}(f)$ and the imaginary component $\mathbf{Im}(f)$ both bounded and Borel measurable, the S-functional calculus for $f$ is defined by
\begin{equation*}\begin{split}
     \langle f(T)x,y\rangle = & \int_{\sigma_S(A)\cap \mathbb{C}_j^+}
      \mathbf{Re}(f(p))d \langle E_jx,y\rangle(p) \ + \\
       & \int_{\sigma_S(A)\cap \mathbb{C}_j^+}
      \mathbf{Im}(f(p))d\langle JE_jx,y\rangle(p)
\end{split}
\end{equation*}
for all $x, y\in\mathbb{H}$
\end{definition}

\begin{remark}
 The S-spectrum was discovered by F. Colombo and I. Sabadini in 2006. Since then the theory of S-functional calculus developed very rapidly. And now it has been quite completed by a series of important contributions (see, e.g., \cite{Alpay-2016,Alpay-2016-2,Colombo-2007,Colombo-2008,Colombo-2009,Colombo-2011})
  There are two other types of functional calculus in quaternionic Hilbert spaces (see \cite{Ghiloni-2018,Viswanath-1971}) similar with the S-functional calculus. The  functional  calculus via intertwining quaternionic PVMs \cite{Ghiloni-2018} and  the S-functional  calculus of unbounded operators \cite{Alpay-2016}  are both based on the continuous functional calculus introduced by  R. Ghiloni, V. Moretti, and A. Perotti \cite{Ghiloni-2013} as pointed out by their authors. The functional calculus via spectral systems \cite{Viswanath-1971} has been established decades earlier than the other, but no proper notion of quaternionic spectrum appears. So from our point of view, maybe not very accurate, the  S-functional calculus is an elegant completion of the functional calculus via spectral systems.
\end{remark}

\subsection{Functional calculus based on the left spectrum}
In order to give a spectral characterization for quaternionic positive definite functions, we need apply two types of functional calculus, namely, the functional calculus based on the S-spectrum \cite{Alpay-2016} and the functional calculus based on the left spectrum \cite{Ludkovsky-2013,Ludkovsky-2012}, to  certain quaternionic Hilbert spaces.

Let $\mathcal H$ be a quaternionic Hilbert space, i.e., a right $\mathbb{H}$-vector space with an inner product $\langle\cdot,\cdot\rangle$. Then the functional calculus based on the S-spectrum can  be established on $\mathcal H$ as shown in \cite{Alpay-2016}. However, we can't apply the functional calculus based on the left spectrum directly to $\mathcal H$, since the basic setting in \cite{Alpay-2016} is different from that in \cite{Ludkovsky-2013,Ludkovsky-2012}. So we must make some adjustment as follows:

1. Introduce a new inner product $(\cdot\mid \cdot)$, given by
$$(x \mid  y)=\overline{\langle x,y\rangle}, \quad \forall \ x, y\in\mathcal{H}.$$

2. Construct a left $\mathbb{H}$-linear structure on $\mathcal{H}$: Let $\{x_a\}_{a\in\Sigma}$ be an orthonormal basis, the left scalar multiplication is defined as
$$qx=\sum_{a\in\Sigma}x_aq\langle x,x_a\rangle, \quad \forall\ q\in \mathbb{H}, \ x\in \mathcal{H}. $$
Then the $\mathbb{H}$-vector space $\mathcal{H}$ endowed with the inner product $(\cdot\mid\cdot)$  can be treated as a quaternionic Hilbert space defined in \cite{Ludkovsky-2013,Ludkovsky-2012}. We would like to emphasize that this very type of quaternionic Hilbert space in \cite{Ludkovsky-2013,Ludkovsky-2012} will be called a bilateral quaternionic Hilbert space in our paper to distinguish it from the other type of  quaternionic Hilbert space introduced in the preceding subsection.

For convenience, let $\tilde{\mathcal{H}}$ denote the bilateral quaternionic Hilbert space transformed from a quaternionic Hilbert space $\mathcal{H}$. An operator $T$ on $\tilde{\mathcal{H}}$ is said to be quasi-linear if it is additive and $\mathbb{R}$-homogeneous, i.e.,
  $$T(x+y)=T(x)+T(y), \quad \forall \ x, y\in \mathrm{D}(T),$$
  $$T(qx)=T(xq)=qT(x),\quad \forall \ x\in \mathrm{D}(T), \ q\in\mathbb{R}. $$
  The Banach space consisting of all bounded quasi-linear operators is denoted by $\mathcal{B}_q(\tilde{\mathcal{H}})$. Note that every bounded right linear operator on $\mathcal{H}$
 is a bounded quasi-linear operator on $\tilde{\mathcal{H}}$, which is to say
 $$\mathcal{B}(\mathcal{H})\subset\mathcal{B}_q(\tilde{\mathcal{H}}). $$

\begin{definition}\cite{Ludkovsky-2013}
  The left spectrum, denoted by $\sigma_L(T)$,  of a closed densely defined quasi-linear operator $T$ is the set of all $q\in\mathbb{H}$ such that $T-qI$ is not bijective from the definition domain $\mathrm{D}(T)$ onto the whole space $\tilde{\mathcal{H}}$.
\end{definition}

As shown in \cite{Ludkovsky-2013}, for any (not necessarily bounded) normal operator $T$, there exists a unique smallest  quasi-commutative von Neumann algebra $\mathbf{A}\subset \mathcal{B}_q(\tilde{\mathcal{H}})$ that $T$ is affiliated with; moreover any quasi-commutative von Neumann algebra is $*$-isomorphic to $C(\Lambda,\mathbb{H})$ for some compact Hausdorff space $\Lambda$ via a generalized Gelfand transform. Then the $*$-isomorphism from $C(\Lambda,\mathbb{H})$ to $\mathbf{A}$ induces  an involution preserving bounded $\mathbb{H}$-algebra homomorphism $$\phi: \mathcal{B}(\sigma_L(T),\mathbb{H})\mapsto\mathcal{B}_q(\tilde{\mathcal{H}}), $$ where $\mathcal{B}(\sigma_L(T),\mathbb{H})$ denotes the algebra consisting of all $\mathbb{H}$-valued bounded Borel measurable functions defined on $\sigma_L(T)$. Then the functional calculus on the left spectrum is naturally defined as follows.

\begin{definition}\label{Def-FC-L-Spec}\cite{Ludkovsky-2013}
The functional calculus on the left spectrum is given by
$$f(T)=\phi(f), \quad \forall \ f\in\mathcal{B}(\sigma_L(T),\mathbb{H}).$$
Moreover, there exist regular $\mathbb{R}$-valued Borel measures $\mu_{i_v,i_l}[x,y]$ with  $v,l=0,1,2,3$ and $ x,y\in\tilde{\mathcal{H}}$ such that
\begin{equation*}
(f(T)x\mid y)=\sum_{v,l=0}^{3}\int_{\sigma_L(T)}f_vi_l\ d\mu_{i_v,i_l}[x,y],
\end{equation*}
where $f=\sum_{v=0}^{3}f_vi_v$ and $f_v$ is $\mathbb{R}$-valued.
\end{definition}

\begin{remark}
  Another generalized Gelfand transform in the quaternionic case has been investigated by S. H. Kulkarni \cite{Kulkarni-1994,Kulkarni-1992} quite earlier. By contrast, the theory established by S. V. Ludkovsky  has a higher degree of completion $($one may refer to \cite{Ludkovsky-2013,Ludkovsky-2012} for more details$)$.
\end{remark}

To avoid misunderstanding, we would like to mention the following facts:

1. Whether $V$ is a quaternionic Hilbert space or a bilateral quaternionic Hilbert space, the real part of its inner product $(\cdot\mid \cdot)$ is a real inner product. For a densely defined operator $T$ on $V$, its adjoint  $T^*$ in $(V,(\cdot\mid \cdot))$ is identical with its adjoint  in the real Hilbert space $(V,\mathbf{Re}(\cdot\mid \cdot))$.
Furthermore, $$(Tx\mid y)=(x\mid T^*y),\qquad x\in\mathrm{D}(T),y\in\mathrm{D}(T^*)$$ holds when $T$ is right linear. But this equality  may fail when $T$ is quasi-linear.

For example, let's observe the operators $L_q$ and $R_q$, i.e., the left and right scalar multiplication by $p\in\mathbb{H}$, on a bilateral quaternionic Hilbert space $V$. It can be easily verified that $L_q^*=L_{\overline{q}}$, $R_q^*=R_{\overline{q}}$, and $(L_qx\mid y)=(x\mid L_{\overline{q}}y)$ holds for all $x,y\in V$. In contrast,  $(R_qx\mid y)=(x\mid R_{\overline{q}}y)$ is generally not valid. The key difference between $L_q$ and $R_q$ is that the former is right linear, while the later is not.

2. The left scalar multiplication in a (bilateral) quaternionic Hilbert space is often uncertain. It may cause some problems since the left spectrum depends on the left scalar multiplication.

For instance, assume that we are discussing two closed densely defined operator $A$ and $B$; in one situation we may discover that there exists $q\in\mathbb{H}$ so that
$A=qB$, then it follows naturally that $\sigma_L(A)$ is identical with $q\sigma_L(B)$; however, in another situation, if the left scalar multiplication changes, the equality
$$\sigma_L(A)=q\sigma_L(B)$$
 may no longer hold true.

The uncertainty of left scalar multiplication has been mentioned in Section 1 of \cite{Ghiloni-2017}, and also can be observed in Lemma 3.5 and Theorem 3.6 in \cite{Alpay-2016}.

\section{Stone's theorems in quaternionic Hilbert spaces}\label{Sec-Stone}

In this section, we are going to apply both the functional calculuses based on the S-spectrum and the left spectrum to establish two generalized Stone's theorems for one-parameter unitary groups in quaternionic Hilbert spaces.
For precision, if $f(T)$ is given by the functional calculus based on the S-spectrum,  we denote it by $f(T)\rvert_S$; if it is given by the functional calculus based on the left spectrum, then  denote it by $f(T)\rvert_L$.
Only when there is no ambiguity, we will just write it as $f(T)$.

\begin{definition}\label{Def-OPUG}
  A one-parameter unitary group on a quaternionic Hilbert space $\mathcal{H}$ is a family $U(t)$, $t\in\mathbb{R}$, of right linear unitary operators on $\mathcal{H}$ with the following properties:
  $$U(0)=I,\quad U(s+t)=U(s)U(t)\ \text{for all } s,t\in \mathbb{R}.$$
  A one-parameter unitary group is said to be strongly continuous if
  \begin{equation}\label{Eq-Def-OPUG-Continuity}
    \lim_{s\to t}\|U(s)(x)-U(t)(x)\|=0
  \end{equation}
  for all $t\in \mathbb{R}$  and all $x\in\mathcal{H}$.
\end{definition}

\begin{definition}\label{Def-OPUG-GENERATOR}
  Let $U(t)$ be a strongly continuous one parameter unitary group on $\mathcal{H}$. The infinitesimal generator of $U(t)$ is the operator $A$ defined by
  \begin{equation}\label{Eq-Def-OPUG-genarator}
    A(x):=\lim_{t\to 0}\frac{U(t)(x)-x}{t},
  \end{equation}
  with its domain $\mathrm{D}(A)$ consisting of all $x\in \mathcal{H}$ for which the limit exists in the norm topology on $\mathcal{H}$.
\end{definition}
This type of infinitesimal generator has been investigated in the study of semigroups over real algebras (see, e.g., \cite{Colombo-2011,Ghiloni-2018}). They are anti-self-adjoint in contrast to the classical infinitesimal generators.

\begin{theorem}\label{Thm-Stone-I}
  Suppose $U(t)$ is a strongly continuous one-parameter unitary group on $\mathcal{H}$. Then the infinitesimal generator $A$ is a right linear anti-self-adjoint operator, and
  $$U(t)=e^{tA}\rvert_S \ \text{for all} \ t\in \mathbb{R}.$$
\end{theorem}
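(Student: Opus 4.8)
The plan is to follow the architecture of the classical proof of Stone's theorem, replacing the complex spectral theorem by Theorem~\ref{Thm-Spetral-Normal} and the exponential functional calculus by the S-functional calculus of Definition~\ref{Def-FC-S-Spec}.

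First I would establish the elementary properties of the generator $A$ of Definition~\ref{Def-OPUG-GENERATOR}. Right linearity is inherited from that of each $U(t)$ together with the norm-continuity of right scalar multiplication, so it survives the defining limit. Density of $\mathrm{D}(A)$ follows from the averaging device: for $x\in\mathcal H$ the Riemann integral $x_\tau=\frac1\tau\int_0^\tau U(s)x\,ds$ of the continuous $\mathcal H$-valued map $s\mapsto U(s)x$ lies in $\mathrm{D}(A)$ and satisfies $x_\tau\to x$ as $\tau\to0^+$; closedness of $A$ is routine. The decisive elementary computation is skew-symmetry: since each $U(t)$ is unitary we have $U(t)^*=U(-t)$, whence for $x,y\in\mathrm{D}(A)$
\[
\langle Ax,y\rangle=\lim_{t\to0}\left\langle\frac{U(t)x-x}{t},y\right\rangle=\lim_{t\to0}\left\langle x,\frac{U(-t)y-y}{t}\right\rangle=-\langle x,Ay\rangle,
\]
and in particular $\mathbf{Re}\langle Ax,x\rangle=0$ for every $x\in\mathrm{D}(A)$.

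The main obstacle is upgrading skew-symmetry to genuine anti-self-adjointness $A^*=-A$, which is indispensable because Theorem~\ref{Thm-Spetral-Normal} applies only to normal operators, and a merely skew-symmetric operator need not be normal. Here I would carry out the quaternionic analogue of the Hille--Yosida resolvent construction. For real $\lambda>0$ the Bochner integrals $R_+x=\int_0^\infty e^{-\lambda s}U(s)x\,ds$ and $R_-x=\int_0^\infty e^{-\lambda s}U(-s)x\,ds$ converge because $\|U(\pm s)x\|=\|x\|$, and the standard integration-by-parts identities (legitimate since $A$ is closed and right linear) show $R_+=(\lambda I-A)^{-1}$ and $R_-=(\lambda I+A)^{-1}$; hence $\mathrm{R}(\lambda I-A)=\mathrm{R}(\lambda I+A)=\mathcal H$. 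Passing to the real Hilbert space $(\mathcal H,\mathbf{Re}\langle\cdot,\cdot\rangle)$, in which by the facts recorded in Section~\ref{Sec-Pre} the adjoint coincides with the quaternionic one, a densely defined skew-symmetric operator with $\lambda I\pm A$ both surjective is skew-adjoint; transporting this back yields $A^*=-A$. Consequently $A$ is normal, Theorem~\ref{Thm-Spetral-Normal} furnishes a spectral system $(E_j,J)$, and the vanishing of $\mathbf{Re}\langle Ax,x\rangle$ forces $\mathbf{Re}(p)=0$ on a set of full $E_j$-measure, so that the S-spectrum of $A$ is purely imaginary.

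Finally I would define $V(t):=e^{tA}|_S$ by applying Definition~\ref{Def-FC-S-Spec} to the intrinsic slice function $p\mapsto e^{tp}$, which on the imaginary spectrum $p=vj$ reduces to $\cos(tv)+j\sin(tv)$. The $*$-homomorphism properties of the S-functional calculus immediately give $V(0)=I$, $V(s+t)=V(s)V(t)$ and $V(t)^*=e^{-tA}|_S=V(t)^{-1}$, so $V(t)$ is a strongly continuous one-parameter unitary group; differentiating the spectral integrals by dominated convergence (with $(e^{tp}-1)/t\to p$ and a dominating function proportional to $|p|^2$, integrable against $\langle E_jx,x\rangle$ precisely when $x\in\mathrm{D}(A)$) shows $V(t)\mathrm{D}(A)\subseteq\mathrm{D}(A)$ and $\frac{d}{dt}V(t)x=AV(t)x$. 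The proof then closes by a uniqueness argument needing only skew-symmetry: for fixed $x\in\mathrm{D}(A)$ set $w(t)=U(t)x-V(t)x$; then $w(0)=0$, $w(t)\in\mathrm{D}(A)$, $w'(t)=Aw(t)$, and $\frac{d}{dt}\|w(t)\|^2=2\mathbf{Re}\langle Aw(t),w(t)\rangle=0$, so $w\equiv0$. Thus $U(t)=V(t)=e^{tA}|_S$ on the dense subspace $\mathrm{D}(A)$, and by boundedness of both operators on all of $\mathcal H$ for each fixed $t$.
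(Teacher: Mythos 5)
Your proposal is correct, and its overall architecture coincides with the paper's: establish that the generator $A$ is anti-self-adjoint, show that $e^{tA}|_S$ defines a strongly continuous one-parameter unitary group whose derivative on $\mathrm{D}(A)$ is $A$ (this is the paper's Lemma \ref{Lemma-exp-anti-self-adjoint}), and conclude with the same Gronwall-type uniqueness argument $\frac{d}{dt}\|U(t)x-e^{tA}|_Sx\|^2=2\,\mathbf{Re}\langle Aw(t),w(t)\rangle=0$ on the dense subspace $\mathrm{D}(A)$. Where you genuinely diverge is the key step of upgrading skew-symmetry to anti-self-adjointness. The paper (Lemma \ref{Lemma-anti-self-adjoint}) does this by complexification: it introduces the slice inner product $\langle x,y\rangle_j=\frac{1}{2}\big(\langle x,y\rangle-j\langle x,y\rangle j\big)$, views $\mathcal{H}$ as a complex Hilbert space over $\mathbb{C}_j$ on which $AR_j$ is the classical generator, and invokes the classical Stone theorem, using the fact that the quaternionic and complex adjoints coincide. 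You instead run a Hille--Yosida resolvent argument: the Laplace transforms $\int_0^\infty e^{-\lambda s}U(\pm s)x\,ds$ exhibit $\lambda I\mp A$ as surjective, and the range criterion for skew-adjointness (applied in the real Hilbert space $(\mathcal{H},\mathbf{Re}\langle\cdot,\cdot\rangle)$, whose adjoint the paper notes agrees with the quaternionic one) yields $A^*=-A$. Both routes are sound. The paper's buys brevity by outsourcing all the analytic work to the classical complex theorem; yours is self-contained, never needs the classical Stone theorem, and makes explicit some facts the paper leaves tacit (density of $\mathrm{D}(A)$, injectivity/surjectivity of $\lambda I\pm A$). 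Two of your steps deserve the same caveat: the identification of $\mathrm{D}(A)$ with $\{x:\int|p|^2\,d\langle E_jx,x\rangle<\infty\}$ and the inclusion $\sigma_S(A)\subseteq\mathbb{H}_I$ are facts you sketch from the spectral representation; the paper also uses both, citing \cite{Alpay-2016} (Corollary 6.5) for the former and asserting the latter, so your level of rigor matches the paper's there.
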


Here $e^{tA}\rvert_S$ is defined as the functional calculus for the intrinsic slice function $e^{tx}$ on the S-spectrum of $A$. To be precise, Definition \ref{Def-FC-S-Spec} says
  \begin{equation*}
   \begin{split}
       \langle e^{tA}\rvert_Sx,y\rangle
    =& \int_{\sigma_S(A)\cap \mathbb{C}_j^+}
      \mathbf{Re}(e^{tp})d \langle E_jx,y\rangle (p) \ + \\
         &  \int_{\sigma_S(A)\cap \mathbb{C}_j^+}
      \mathbf{Im}(e^{tp})d\langle JE_jx,y\rangle (p),
    \end{split}
  \end{equation*}
where $(E_j,J)$ is the spectral system associated with $A$. Moreover, we have
$$\mathbf{Re}(e^{tp})=\cos(t\lvert p\rvert)
  \ \text{ and }\
  \mathbf{Im}(e^{tp})=\sin(t\lvert p\rvert),
$$
since  $\sigma_S(A)$ is a subset of $\mathbb H_I$, namely, the 3-dimensional real vector space of pure imaginary quaternions.

\begin{theorem}\label{Thm-Stone-II}
  Suppose $U(t)$ is a strongly continuous one-parameter unitary group on $\mathcal{H}$. Then
  $$U(t)=e^{tA}\rvert_L \ \text{for all} \ t\in \mathbb{R},$$
  where $A$ is the infinitesimal generator of $U(t)$.
\end{theorem}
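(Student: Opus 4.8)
The plan is to define $V(t):=e^{tA}|_L$ directly through the left functional calculus of Definition \ref{Def-FC-L-Spec} and then to show that $V(t)$ is a strongly continuous one-parameter unitary group whose infinitesimal generator is again $A$; the theorem then follows from the uniqueness of a unitary group with prescribed generator. First I would record that, by Theorem \ref{Thm-Stone-I}, the generator $A$ is right linear and anti-self-adjoint, hence normal, so that the left functional calculus applies to $A$ regarded as a (possibly unbounded) operator on $\tilde{\mathcal{H}}$. As in the self-adjoint case, anti-self-adjointness forces $\sigma_L(A)\subseteq\mathbb{H}_I$; thus for $q\in\sigma_L(A)\setminus\{0\}$ we may write $q=|q|\,(q/|q|)$ with $q/|q|\in\mathbb{S}$ and
$$f_t(q):=e^{tq}=\cos(t|q|)+\frac{q}{|q|}\sin(t|q|),$$
which is a continuous, Borel measurable function of modulus $1$. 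Hence $V(t)=\phi(f_t)$ is well defined and lies in $\mathcal{B}_q(\tilde{\mathcal{H}})$.

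The group and unitarity properties should fall out of the algebraic structure of $\phi$. Since for fixed $q\in\mathbb{H}_I$ the elements $e^{sq}$ and $e^{tq}$ both lie in the commutative field $\mathbb{C}_{q/|q|}$, one has $f_{s+t}=f_sf_t$ pointwise; as $\phi$ is an $\mathbb{H}$-algebra homomorphism this gives $V(s+t)=\phi(f_sf_t)=\phi(f_s)\phi(f_t)=V(s)V(t)$, together with $V(0)=\phi(1)=I$. Because $\phi$ preserves the involution and $\overline{f_t(q)}=e^{-tq}=f_{-t}(q)$, we obtain $V(t)^*=\phi(f_{-t})=V(-t)=V(t)^{-1}$, so each $V(t)$ is unitary. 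For strong continuity I would exploit the integral representation of Definition \ref{Def-FC-L-Spec}: writing $V(t)-V(s)=\phi(f_t-f_s)$ and using that $\phi$ is a $*$-homomorphism,
$$\|(V(t)-V(s))x\|^2=\big(\phi(|f_t-f_s|^2)x\,\big|\,x\big)=\int_{\sigma_L(A)}|f_t(q)-f_s(q)|^2\,d\nu_x(q),$$
where $\nu_x$ is the non-negative measure furnished by the positivity of the calculus (for a non-negative real function $h$ one has $\phi(h)=\phi(\sqrt{h})^*\phi(\sqrt{h})\geq 0$). Since the integrand tends to $0$ pointwise as $s\to t$ and is bounded by $4$, dominated convergence yields $\lim_{s\to t}\|V(s)x-V(t)x\|=0$.

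It remains to identify the generator of $V(t)$ with $A$ and to invoke uniqueness. For $x\in\mathrm{D}(A)$ I would show that $\tfrac1t(V(t)x-x)=\phi\!\left(\tfrac{f_t-1}{t}\right)x$ converges to $Ax$ as $t\to 0$, using that $\tfrac1t(e^{tq}-1)\to q$ and that the left functional calculus applied to the identity function returns $A$. This shows $A\subseteq A'$, where $A'$ is the generator of $V(t)$; but $A'$ is anti-self-adjoint by Theorem \ref{Thm-Stone-I} applied to $V(t)$, and a closed anti-self-adjoint operator admits no proper anti-self-adjoint extension, so $A'=A$. Finally, for $x\in\mathrm{D}(A)$ the difference $w(t)=U(t)x-V(t)x$ satisfies $w'(t)=Aw(t)$ with $w(0)=0$, whence $\frac{d}{dt}\|w(t)\|^2=2\mathbf{Re}(Aw(t)\,|\,w(t))=0$ because $A$ is anti-self-adjoint; thus $w\equiv0$, and $U(t)=V(t)=e^{tA}|_L$ on the dense set $\mathrm{D}(A)$ and hence everywhere by continuity. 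I expect the main obstacle to be precisely the generator step: the identity map $q\mapsto q$ is unbounded when $A$ is, so one must pass to the unbounded extension of the left functional calculus and control the convergence $\tfrac1t(e^{tq}-1)\to q$ uniformly enough on $\mathrm{D}(A)$, all while keeping track of the uncertainty of the left scalar multiplication flagged in the preliminaries, which is what makes the left-spectrum setting more delicate than the S-spectrum one.
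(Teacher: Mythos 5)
Your proposal follows essentially the same route as the paper: the paper's proof of Theorem \ref{Thm-Stone-II} simply asserts that an analog of Lemma \ref{Lemma-exp-anti-self-adjoint} holds with $e^{tA}|_S$ replaced by $e^{tA}|_L$ (i.e., $e^{tA}|_L$ is a strongly continuous one-parameter unitary group whose infinitesimal generator is $A$) and then repeats the differentiation argument for $g_x(t)=U(t)x-e^{tA}|_L x$ from the proof of Theorem \ref{Thm-Stone-I}, which is exactly the structure you propose. Your write-up in fact supplies more detail than the paper's two-line proof — the group law, unitarity and strong continuity of $\phi(f_t)$, the extension argument forcing $A'=A$, and the subtlety about the unbounded left functional calculus are all left implicit there.
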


Note that $e^{tA}\rvert_L$ is given as the functional calculus  for the function $e^{tx}$ on the left spectrum of $A$ in the bilateral Hilbert space $\tilde{\mathcal{H}}$ transformed from $\mathcal{H}$.
Moreover, according to Definition \ref{Def-FC-L-Spec}, we have
\begin{equation}\label{Eq-Stone-II-integral}
\begin{split}
   (e^{tA}\rvert_Lx\mid y)= & \sum_{l=0}^{3}\int_{\sigma_L(T)}\cos(\lvert p\rvert )i_l\ d\mu_{i_v,i_l}[x,y](p)\ +\\
                & \sum_{v=1}^{3}\sum_{l=0}^{3}\int_{\sigma_L(T)}\frac{p_v}{\lvert p\rvert }\sin(\lvert p\rvert )i_l\ d\mu_{i_v,i_l}[x,y](p),
\end{split}
\end{equation}
where $p\in\sigma_L(A)$ is composed as $p=p_1i_1+p_2i_2+p_3i_3$ with $p_v\in\mathbb{R}$, and $\mu_{i_v,i_l}[x,y]$ are the regular Borel measures on $\sigma_L(A)$ uniquely determined by $A$ and $x,y\in\tilde{\mathcal{H}}$.

A generalization of Stone's theorem to the case of a one-parameter unitary group in a quaternionic Hilbert space has been established by  S. V. Ludkovsky (see Theorem 2.33 in \cite{Ludkovsky-2007}).  However, this version of Stone's theorem does not fit in with our aims, because the infinitesimal generator defined  in \cite{Ludkovsky-2007} is self-adjoint.

We would like to emphasize that the major difference between  the earlier version of Stone's theorem and  our versions  is that the spectrum of a self-adjoint generator is included in the real line, while that of an anti-self-adjoint generator is included in the 3-dimensional real vector space consisting of all pure imaginary quaternions.

 \begin{remark}
  The generalized Stone's theorems $($Theorems \ref{Thm-Stone-I} and \ref{Thm-Stone-II}$)$ can be  proved in almost the same way as the case when $\mathcal{H}$ is a complex Hilbert space $($see, e.g., Chap. 10 in \cite{Hall-2013}$)$.

  In fact, the following relation between a semigroup $U(t)$ on a quaternionic Hilbert space and  its infinitesimal generator $A$:
  $$U(t)=e^{tA}$$
  has been verified in several settings
  $($see, e.g., Theorem 3.2 in \cite{Colombo-2011} and Theorem 6.3 in \cite{Ghiloni-2018}$)$.
  Compared with the previous contributions made by others, Theorems \ref{Thm-Stone-I} and \ref{Thm-Stone-II} are more deeply involved with the functional calculus. And what matters in this paper is that they are vital for us to achieve our purpose. Only based on these two generalized Stone's Theorems, we are able to reveal the spectral characteristics of quaternionic  positive definite functions.
 \end{remark}

\begin{lemma}\label{Lemma-exp-anti-self-adjoint}
  Suppose $A$ is a right linear anti-self-adjoint operator on $\mathcal{H}$, and $U(t)$ is a family of operators defined as
  $$U(t)=e^{tA}\rvert_S,\ t\in\mathbb R,$$
  then the following results hold true:

  1. $U(t)$ is a strongly continuous one-parameter unitary group.

  2. For any $x\in \mathrm{D}(A)$,
  $$
  A(x)=\lim_{t\to 0}\frac{U(t)(x)-x}{t},
  $$
  where the limit is in the norm topology on $\mathcal{H}$.

  3. For any $x\in \mathcal{H}$, if
  $$\lim_{t\to 0}\frac{U(t)(x)-x}{t}
  $$
  exists in the norm topology on $\mathcal{H}$, then $x\in \mathrm{D}(A)$.

\end{lemma}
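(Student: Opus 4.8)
The plan is to realize everything through the spectral system $(E_j,J)$ associated with $A$ in Theorem \ref{Thm-Spetral-Normal}. Since $A$ is anti-self-adjoint, $\sigma_S(A)\subset\mathbb{H}_I$, so for $p\in\sigma_S(A)\cap\mathbb{C}_j^+$ one has $\mathbf{Re}(p)=0$ and $|p|=\mathbf{Im}(p)$. Writing $\lambda:=|p|$ and using $\mathbf{Re}(e^{tp})=\cos(t\lambda)$, $\mathbf{Im}(e^{tp})=\sin(t\lambda)$, Definition \ref{Def-FC-S-Spec} gives $U(t)=C(t)+JS(t)$, where $C(t):=\int\cos(t\lambda)\,dE_j$ and $S(t):=\int\sin(t\lambda)\,dE_j$ are self-adjoint and commute with each other and with $J$. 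The computational engine I would set up first is the isometry identity $\|f(A)x\|^2=\int_{\sigma_S(A)\cap\mathbb{C}_j^+}|f(p)|^2\,dE_{x,x}(p)$, valid for intrinsic slice functions $f=f_0+f_1j$ and, by approximation, for the unbounded identity as well. It is obtained by expanding $\|(f_0(A)+Jf_1(A))x\|^2$ and observing that the cross term $2\mathbf{Re}\langle f_0(A)x,Jf_1(A)x\rangle$ vanishes: for a self-adjoint $g$ commuting with the anti-self-adjoint $J$ one has $\mathbf{Re}\langle x,Jgx\rangle=\mathbf{Re}\langle gx,Jx\rangle=-\mathbf{Re}\langle x,Jgx\rangle$, forcing $\mathbf{Re}\langle x,Jgx\rangle=0$; here $E_{x,x}$ is the finite positive measure $\omega\mapsto\langle E_j(\omega)x,x\rangle$ with total mass $\|x\|^2$.

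For item 1, $U(0)=I$ is the calculus of the constant $1$, and the group law $U(s)U(t)=U(s+t)$ follows from the multiplicativity of the S-functional calculus on intrinsic slice functions \cite{Alpay-2016}, since $e^{sp}e^{tp}=e^{(s+t)p}$. Unitarity I would read off from the isometry identity: $\|U(t)x\|^2=\int|e^{tp}|^2\,dE_{x,x}=\int 1\,dE_{x,x}=\|x\|^2$, so each $U(t)$ is isometric and, having inverse $U(-t)$, is unitary. For strong continuity it suffices by the group law to treat $t\to 0$; using unitarity, $\|U(s)x-x\|^2=2\|x\|^2-2\mathbf{Re}\langle U(s)x,x\rangle$, with $\mathbf{Re}\langle U(s)x,x\rangle=\int\cos(s\lambda)\,dE_{x,x}+\int\sin(s\lambda)\,d\mathbf{Re}\langle JE_jx,x\rangle$. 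Both integrands are bounded by $1$ and the measures are finite, so dominated convergence sends this to $\int 1\,dE_{x,x}=\|x\|^2$ as $s\to 0$, whence $\|U(s)x-x\|\to 0$.

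For item 2, fix $x\in\mathrm{D}(A)$, so that $\int\lambda^2\,dE_{x,x}=\|Ax\|^2<\infty$. Writing $\frac{U(t)x-x}{t}-Ax=g_t(A)x$ with $g_t(p)=\frac{e^{tp}-1}{t}-p$, the isometry identity yields $\|\tfrac{U(t)x-x}{t}-Ax\|^2=\int|g_t(p)|^2\,dE_{x,x}$, where $|g_t(p)|^2=\big(\tfrac{\cos(t\lambda)-1}{t}\big)^2+\big(\tfrac{\sin(t\lambda)}{t}-\lambda\big)^2$. Both terms tend to $0$ pointwise as $t\to 0$, and the elementary bounds $|\cos s-1|\le|s|$ and $|\sin s|\le|s|$ give $|g_t(p)|^2\le 5\lambda^2$, an $E_{x,x}$-integrable majorant; dominated convergence then forces the integral to $0$, proving $\lim_{t\to0}\frac{U(t)x-x}{t}=Ax$. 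I expect this step to be the main obstacle, the delicate point being the rigorous passage from the bounded calculus of Definition \ref{Def-FC-S-Spec} to the unbounded combination $g_t(A)$ on $\mathrm{D}(A)$, i.e. justifying the isometry identity for the unbounded identity function; this is where the spectral theorem (Theorem \ref{Thm-Spetral-Normal}) together with a truncation argument on $\{\lambda\le N\}$ is needed.

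For item 3, let $B$ denote the generator of $U$, i.e. $Bx:=\lim_{t\to0}\frac{U(t)x-x}{t}$ on its natural domain $\mathrm{D}(B)$; item 2 gives $\mathrm{D}(A)\subseteq\mathrm{D}(B)$ with $B|_{\mathrm{D}(A)}=A$. Differentiating the unitarity relation $\langle U(t)x,U(t)y\rangle=\langle x,y\rangle$ at $t=0$ for $x,y\in\mathrm{D}(B)$ (using strong continuity and continuity of the inner product) shows $\langle Bx,y\rangle+\langle x,By\rangle=0$, so $B$ is anti-symmetric; in particular $\mathbf{Re}\langle Bz,z\rangle=0$ for all $z\in\mathrm{D}(B)$, which makes $I-B$ injective, since $Bz=z$ would force $\|z\|^2=\mathbf{Re}\langle Bz,z\rangle=0$. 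Because $A$ is anti-self-adjoint, $\sigma_S(A)\subset\mathbb{H}_I$ keeps $1-p$ bounded away from $0$, so $(I-A)^{-1}$ exists as a bounded operator and $I-A$ maps $\mathrm{D}(A)$ onto $\mathcal{H}$. Now take $x\in\mathrm{D}(B)$ and choose $x'\in\mathrm{D}(A)$ with $(I-A)x'=(I-B)x$; then $(I-B)x'=(I-A)x'=(I-B)x$, and injectivity of $I-B$ yields $x=x'\in\mathrm{D}(A)$. Hence $\mathrm{D}(B)=\mathrm{D}(A)$ and $B=A$, which is exactly item 3.
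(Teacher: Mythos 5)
Your proposal is correct, and for items 1 and 2 it is essentially the paper's own argument: the paper likewise reads off unitarity and the group law from the $*$-homomorphism property of the S-functional calculus, proves strong continuity by dominated convergence against the finite measure $\langle E_j(\cdot)x,x\rangle$, and proves item 2 with exactly the same majorant $5|p|^2$; your explicit cross-term cancellation $\mathbf{Re}\langle x, Jgx\rangle=0$ is a correct hand-made substitute for the homomorphism identity $\|h(A)x\|^2=\langle |h|^2(A)x,x\rangle$ that the paper uses. The delicate point you flag in item 2 --- extending the isometry identity to the unbounded function $g_t(p)=\frac{e^{tp}-1}{t}-p$ on $\mathrm{D}(A)$ --- is precisely what the paper outsources to Corollary 6.5 of the reference \cite{Alpay-2016}, so your truncation sketch and the paper's citation are two ways of paying the same debt. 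Where you genuinely diverge is item 3. The paper shows the generator $A'$ is anti-symmetric (via $U(t)^*=U(-t)$), notes it extends the anti-self-adjoint $A$, and concludes $\mathrm{D}(A')=\mathrm{D}(A)$ by the maximality argument: taking adjoints reverses inclusions, so $A'\subseteq -(A')^*\subseteq -A^*=A$. You instead run the classical resolvent/range argument: $I-B$ is injective because $B$ is anti-symmetric, $I-A$ maps $\mathrm{D}(A)$ onto $\mathcal{H}$ because $1\notin\sigma_S(A)\subset\mathbb{H}_I$, and these two facts squeeze $\mathrm{D}(B)$ down to $\mathrm{D}(A)$. Both are standard and correct; the paper's route is shorter once one is comfortable with adjoint inclusions for unbounded quaternionic operators, while yours avoids adjoints of the generator entirely at the cost of one extra invocation of the unbounded spectral theorem (or of the bounded intrinsic slice function $(1-p)^{-1}$) to get surjectivity of $I-A$.
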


\begin{proof}
  Since $\sigma_S(A)$ contains only pure imaginary quaternions, the function $f_t(p):=e^{tp}$ is a bounded continuous intrinsic slice function on $\sigma_S(A)$. More precisely,
  $$f_t(jv)=\cos(tv)+j\sin(tv),$$
  holds for any $j\in\mathbb{S}$ and any $v\in\mathbb{R}$ with $jv\in\sigma_S(A)$.
  Hence, for different $j,j'\in\mathbb{S}$, $f_t(jv)$ and $f_t(j'v)$ share the same real and imaginary components that are both bounded and continuous. Therefore the functional calculus for $f(p)$ on the S-spectrum of $A$ is well defined as shown in Definition \ref{Def-FC-S-Spec}:
  \begin{equation*}
   \begin{split}
      \langle f_t(A)x,y\rangle
    = & \int_{\sigma_S(A)\cap \mathbb{C}_j^+}
      \cos(t\lvert p\rvert )d \langle E_j(p)x,y\rangle + \\
        & \int_{\sigma_S(A)\cap \mathbb{C}_j^+}
      \sin(t\lvert p\rvert )d\langle JE_j(p)x,y\rangle,
   \end{split}
  \end{equation*}
  where $(E_j,J)$ is the spectral system associated with  $A$.

  The functional calculus on the S-spectrum is also a $*$-homomorphism of real (not quaternionic) Banach $C^*$-algebras like the classical functional calculus \cite{Alpay-2016}. It indicates that
  $$U(t)U(t)^*=f_t(A)\overline{f_t}(A)=(f_t\overline{f_t})(A)=1(A)=I,$$
  $$U(t)^*U(t)=\overline{f_t}(A)f_t(A)=(\overline{f_t}f_t)(A)=1(A)=I,$$
  $$U(s)U(t)=f_s(A)f_t(A)=(f_sf_t)(A)=f_{s+t}(A)=U(s+t); $$
  which is to say $U(t)$ is a one-parameter unitary group. Moreover, for any $x\in \mathcal{H}$ and $s, t\in \mathbb{R}$, we have
  \begin{equation*}
   \begin{split}
   \|U(s)(x)-U(t)(x)\|^2
    =&\langle  (f_s(A)-f_t(A))^*(f_s(A)-f_t(A))x, x\rangle\\
    =&\langle   \lvert f_s-f_t \rvert^2(A)x, x\rangle.
   \end{split}
  \end{equation*}
  Definition \ref{Def-FC-S-Spec} yields
  $$
  \langle  \lvert f_s-f_t\rvert^2(A)x, x\rangle
    =\int_{\sigma_S(A)\cap \mathbb{C}_j^+}
     \big(\cos((s-t)\lvert p\rvert )-1\big)^2+\big(\sin((s-t)\lvert p\rvert )\big)^2 d\mu_{j,x}(p)
  $$
  with
  $\mu_{j,x}=\langle E_j(p)x,x\rangle$ being a finite Borel measure.
  The integral on the right side  tends to zero as $s$ approaches
  $t$, by dominated convergence. Thus we reach the first conclusion:

  1. $U(t)$ is a strongly continuous one-parameter unitary group.

  To see the second conclusion, first notice that Corollary 6.5 in \cite{Alpay-2016} indicates:
  \begin{eqnarray*}
     & &\Big\|\frac{U(t)(x)-x}{t}-A(x)\Big\|^2\\
     &=& \int_{\sigma_S(A)\cap \mathbb{C}_j^+}
      \lvert\frac{e^{tp}-1}{t}-p \rvert^2 d\mu_{j,x}(p) \\
     &=& \int_{\sigma_S(A)\cap \mathbb{C}_j^+}
      \lvert\frac{\cos(t\lvert p\rvert )-1}{t} \rvert^2 +  \lvert\frac{\sin(t\lvert p\rvert )}{t}-\lvert p\rvert  \rvert^2 d\mu_{j,x}(p)
  \end{eqnarray*}
  is valid
  for all $x\in \mathrm{D}(A)$ and all $t\in\mathbb R$.
  Then we apply the dominated convergence theorem again with $5\lvert p\rvert ^2$ as the dominating function to achieve the desired result:

  2.  For any $x\in \mathrm{D}(A)$,
  $$
  A(x)=\lim_{t\to 0}\frac{U(t)(x)-x}{t},
  $$
  where the limit is in the norm topology of $\mathcal{H}$.

  For the third conclusion, let $A'$ be the infinitesimal generator of $U(t)$. For any $x,y\in\mathrm{D}(A')$, one can easily see
  \begin{eqnarray*}
    \langle A'(x),y\rangle
    &=&\lim_{t\to 0}\langle \frac{U(t)(x)-x}{t},y\rangle \\
    &=& \lim_{t\to 0}\langle x,\frac{U(-t)(y)-y}{t}\rangle \\
    &=& \langle x,-A'(y)\rangle
  \end{eqnarray*}
  Hence, $A'$ is anti-symmetric. Combining with the second conclusion we have:

  (1) $A'$ is an extension of the anti-self-adjoint operator $A$,

  (2) $\mathrm{D}(A')\subset\mathrm{D}((A')^*)$,\\
 and consequently $\mathrm{D}(A)$ is identical with $\mathrm{D}(A')$, which indicates the third conclusion:

 3. For any $x\in \mathcal{H}$, if
  $$\lim_{t\to 0}\frac{U(t)(x)-x}{t}
  $$
  exists in the norm topology of $\mathcal{H}$, then $x\in \mathrm{D}(A)$.

\end{proof}

\begin{lemma}\label{Lemma-anti-self-adjoint}
  For any strongly continuous one-parameter unitary group $U(t)$ on $\mathcal{H}$, its infinitesimal generator $A$ is anti-self-adjoint.
\end{lemma}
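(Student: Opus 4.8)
The plan is to follow the classical route to Stone's theorem, adapted to the right quaternionic setting, and to isolate the one genuinely nontrivial point. I would prove in turn that $A$ is right linear and densely defined, that $A$ is anti-symmetric (so that $A\subseteq -A^*$), and finally that $\mathrm{D}(A^*)\subseteq\mathrm{D}(A)$, which promotes anti-symmetry to the full equality $A^*=-A$. This last inclusion is the heart of the matter; the first two steps are essentially formal.

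Right linearity is immediate from that of each $U(t)$: for $x\in\mathrm{D}(A)$ and $q\in\mathbb H$ one has $U(t)(xq)=(U(t)x)q$, so the difference quotient $\frac{U(t)(xq)-xq}{t}=\big(\frac{U(t)x-x}{t}\big)q$ converges, whence $xq\in\mathrm{D}(A)$ and $A(xq)=(Ax)q$. For density I would use the standard averaging device: for $x\in\mathcal H$ and $\varepsilon>0$ set $x_\varepsilon:=\frac1\varepsilon\int_0^\varepsilon U(s)x\,ds$, an $\mathcal H$-valued Riemann integral that exists by strong continuity. Rewriting $U(h)x_\varepsilon-x_\varepsilon$ as a difference of two integrals over short intervals via the group law shows $x_\varepsilon\in\mathrm{D}(A)$ with $Ax_\varepsilon=\frac1\varepsilon\big(U(\varepsilon)x-x\big)$, and $x_\varepsilon\to x$ as $\varepsilon\to0$ by strong continuity, so $\mathrm{D}(A)$ is dense. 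Anti-symmetry follows from unitarity: since $U(t)^*=U(t)^{-1}=U(-t)$, for $x,y\in\mathrm{D}(A)$ we get $\langle Ax,y\rangle=\lim_{t\to0}\langle\frac{U(t)x-x}{t},y\rangle=\lim_{t\to0}\langle x,\frac{U(-t)y-y}{t}\rangle=-\langle x,Ay\rangle$, i.e. $A\subseteq -A^*$.

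For the decisive inclusion $\mathrm{D}(A^*)\subseteq\mathrm{D}(A)$ I would mirror the Hille--Yosida argument and show that $I-A$ and $I+A$ map $\mathrm{D}(A)$ onto all of $\mathcal H$. The candidate inverse of $I-A$ is $Rx:=\int_0^\infty e^{-t}U(t)x\,dt$, a convergent $\mathcal H$-valued improper integral because $\|U(t)x\|=\|x\|$; computing $\frac{U(h)Rx-Rx}{h}$ by the same shift-of-variable trick gives $Rx\in\mathrm{D}(A)$ and $(I-A)Rx=x$, so $I-A$ is onto, and applying this to the group $U(-t)$, whose generator is $-A$, yields surjectivity of $I+A$. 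The computation $\|(I-A)u\|^2=\|u\|^2+\|Au\|^2$ (the cross terms cancel because anti-symmetry forces $\mathbf{Re}\langle Au,u\rangle=0$) shows $I-A$ is injective with closed range. Now, given $y\in\mathrm{D}(A^*)$, anti-symmetry gives $(I-A^*)u=(I+A)u$ for $u\in\mathrm{D}(A)$; surjectivity of $I+A$ furnishes $u\in\mathrm{D}(A)$ with $(I+A)u=(I-A^*)y$, so $(I-A^*)(y-u)=0$, and since $\ker(I-A^*)=\mathrm{R}(I-A)^\perp=\{0\}$ we conclude $y=u\in\mathrm{D}(A)$.

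The main obstacle is precisely this last step, together with the quaternionic bookkeeping it rests on: I must confirm that the $\mathcal H$-valued integrals behave as in the scalar case, and that the identities $(I-A)^*=I-A^*$ and $\ker(T^*)=\mathrm{R}(T)^\perp$ persist for right linear operators. All of these hold because $A$ is right linear, so its quaternionic adjoint coincides with its adjoint in the real Hilbert space $(\mathcal H,\mathbf{Re}\langle\cdot,\cdot\rangle)$, as recorded in Section \ref{Sec-Pre}; indeed this coincidence supplies an alternative and shorter route, namely to regard $U(t)$ as a strongly continuous orthogonal group on that real Hilbert space, invoke the classical real Stone theorem to obtain a real skew-adjoint generator, and transfer the conclusion back.
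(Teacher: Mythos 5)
Your proposal is correct, but it takes a genuinely different route from the paper. The paper's proof is a reduction to the classical \emph{complex} Stone theorem: it equips $\mathcal{H}$ with the slice inner product $\langle x,y\rangle_j=\tfrac12\big(\langle x,y\rangle-j\langle x,y\rangle j\big)$, observes that the quaternionic adjoint of a right linear operator coincides with its adjoint in the complex Hilbert space $(\mathcal{H},\langle\cdot,\cdot\rangle_j)$, identifies $AR_j=R_jA$ as the classical infinitesimal generator of $U(t)$ viewed as a complex unitary group, and invokes the complex Stone theorem to get $AR_j$ self-adjoint, hence $A$ anti-self-adjoint. Your main argument instead re-proves the relevant half of Stone's theorem from scratch in the quaternionic setting: density of $\mathrm{D}(A)$ by averaging, anti-symmetry $A\subseteq -A^*$, surjectivity of $I\pm A$ via the Laplace-transform resolvent $Rx=\int_0^\infty e^{-t}U(t)x\,dt$, and then $\mathrm{D}(A^*)\subseteq\mathrm{D}(A)$ from $\ker\big((I-A)^*\big)=\mathrm{R}(I-A)^\perp=\{0\}$. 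All the quaternionic bookkeeping you flag does go through for right linear operators (the integrals are ordinary real-Banach-space integrals, and $(I-A)^*=I-A^*$ and the kernel--range duality hold, e.g.\ because the quaternionic adjoint agrees with the adjoint in $(\mathcal{H},\mathbf{Re}\langle\cdot,\cdot\rangle)$, a fact recorded in the paper's preliminaries); note also that since $I-A$ is already surjective, its range is trivially closed, so that remark is superfluous. What each approach buys: the paper's reduction is much shorter and reuses the complex-slice structure that recurs throughout its spectral machinery, but it imports the complex Stone theorem as a black box; your argument is longer but self-contained, and the surjectivity of $I\pm A$ you establish is exactly the essential-skew-adjointness criterion, so it yields strictly more information (an explicit resolvent) than the bare statement. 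Your closing alternative --- viewing $U(t)$ as an orthogonal group on $(\mathcal{H},\mathbf{Re}\langle\cdot,\cdot\rangle)$ and citing the real Stone theorem --- is the exact analogue of the paper's proof with the real field in place of $\mathbb{C}_j$, and is essentially the same reduction in different clothing.
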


\begin{proof}
   Set $\displaystyle{\langle x,y\rangle_j:=\frac{1}{2}\big(\langle x,y\rangle-j\langle x,y\rangle j\big)}$ with $j$ being an arbitrary imaginary unit. It can be easily seen that
 $\langle \cdot ,\cdot\rangle_j$ is a $\mathbb{C}_j$-linear inner product,
 and the norm induced by $\langle \cdot ,\cdot\rangle_j$ is identical with the one induced by $\langle
 \cdot ,\cdot\rangle$, which implies that  $\mathcal{H}$, as a $\mathbb{C}_j$-linear vector space, endowed with the inner product $\langle \cdot ,\cdot\rangle_j$ is a complex Hilbert space.
 Furthermore,
 the adjoint  of any densely defined right quaternion-linear operator with respect to the quaternionic inner product $\langle \cdot ,\cdot\rangle$ is identical with
 the adjoint  with respect to the complex inner product $\langle \cdot ,\cdot\rangle_j$.

 From this point of view, $U(t)$ can be treated as a strongly continuous one-parameter unitary group in the complex Hilbert space $(\mathcal{H},\langle \cdot ,\cdot\rangle_j)$, and $AR_j(=R_jA)$ is exactly the classical infinitesimal generator of $U(t)$ (see, e.g., Definition 10.13 in \cite{Hall-2013}), where $R_j$ is the right scalar multiplication by $j$, i.e.,
 $$R_j(x)=xj, \quad \forall \ x\in \mathcal{H}.
 $$
 We thus have $AR_j$ is self-adjoint  in the complex Hilbert space $(\mathcal{H},\langle \cdot ,\cdot\rangle_j)$ according to the original Stone's theorem (see, e.g., Theorem 10.15 in \cite{Hall-2013}), which indicates $A$ is anti-self-adjoint  in $(\mathcal{H},\langle \cdot ,\cdot\rangle_j)$. Since the adjoint  of $A$ with respect to $\langle \cdot ,\cdot\rangle$ is identical with
 the adjoint  with respect to $\langle \cdot ,\cdot\rangle_j$, we conclude that
 $A$ is anti-self-adjoint in the quaternionic Hilbert space $(\mathcal{H},\langle \cdot ,\cdot\rangle)$.

\end{proof}

\begin{proof}[Proof of Theorem \ref{Thm-Stone-I}]

  Suppose $U(t)$ is a strongly continuous one-parameter unitary group on $\mathcal{H}$, and $A$ is its infinitesimal generator. By Lemma \ref{Lemma-anti-self-adjoint}, $A$ is anti-self-adjoint. Then
  by Lemma \ref{Lemma-exp-anti-self-adjoint}, $e^{tA}\rvert_S$ is a strongly continuous one-parameter unitary group with  the infinitesimal generator identical with $A$.

  Take $x\in \mathrm{D}(A)$, and consider the function $g_x(t)=U(t)(x)-e^{tA}\rvert_S(x)$. From the definition of the infinitesimal generator, it follows immediately that
  $$\frac{d}{dt}U(t)(x)=AU(t)(x)=U(t)A(x),$$
  and
  $$\frac{d}{dt}e^{tA}\rvert_S(x)=Ae^{tA}\rvert_S(x)=e^{tA}\rvert_SA(x),$$
  hold in the norm topology of $\mathcal{H}$, which means
  $U(t)(x)$ and $e^{tA}\rvert_S(x)$ both belong to $\mathrm{D}(A)$, and
  $$\frac{d}{dt} g_x(t)=A\big(U(t)(x)-e^{tA}\rvert_S(x)\big)=Ag_x(t).
  $$
  Hence,
  $$
  \begin{array}{rcl}
   \displaystyle{\frac{d}{dt} \langle g_x(t),g_x(t)\rangle}
      & = &
       \displaystyle{\langle \frac{d}{dt}g_x(t),g_x(t)\rangle + \langle g_x(t),\frac{d}{dt}g_x(t)\rangle}\\
      & = & \langle Ag_x(t),g_x(t)\rangle + \langle g_x(t),Ag_x(t)\rangle\\
      & = & \langle g_x(t),-Ag_x(t)\rangle + \langle g_x(t),Ag_x(t)\rangle \\
      & = & 0.
  \end{array}
  $$
  Since $g_x(0)=0$, we  deduce that $g_x(t)=0$ holds for all $x\in\mathrm{D}(A)$ and all $t\in\mathbb R$, or equivalently
  $$U(t)(x)=e^{tA}\rvert_S(x), \quad \forall \ x\in\mathrm{D}(A),\ \forall \ t\in\mathbb R.
  $$
  In conclusion, $U(t)$ and $e^{tA}\rvert_S$ agree on a dense subspace of $\mathcal{H}$, and thus on the whole space.

\end{proof}

\begin{proof}[Proof of Theorem \ref{Thm-Stone-II}]

An analog of Lemma \ref{Lemma-exp-anti-self-adjoint} can be obtained by replacing $e^{tA}\rvert_S$ with $e^{tA}\rvert_L$. Then we can adopt the same procedure as in the proof of Theorem \ref{Thm-Stone-I} to carry out this one.  To avoid repetition, the details of this proof are omitted.

\end{proof}

\section{Bochner's theorem for quaternionic positive definite functions}\label{Sec-Bochner}
   In this section we are going to  show that every continuous quaternionic  positive definite function is related with a spectral system on $\mathbb{R}^+$ and also related with a non-negative finite Borel measure on $\mathbb R^3$ in  certain ways. Moreover, the spectral system and the Borel measure will induce two identical quaternion-valued measures of an unusual type that we name as slice-condensed measures. Finally, A one-to-one correspondence between continuous  quaternionic  positive definite functions and slice-condensed measures will be established.

   The earlier contribution \cite{Alpay-2016-1} by D. Alpay, F. Colombo, D. P. Kimsey, and I. Sabadini revealed a one-to-one correspondence between continuous  quaternionic  positive definite functions and q-positive measures. We would like to mention that slice-condensed measures are different with q-positive measures. And one may refer to \eqref{Eq-final-remark} in our final remark for a brief result about the connection between these two types of measure.

\begin{definition}\cite{Alpay-2016-1}
  A quaternion-valued function $\varphi$ on $\mathbb{R}$ is said to be  positive definite if for any $t_1,t_2,\cdots,t_k\in\mathbb{R}$  and any $q_1,q_2,\cdots,q_k\in\mathbb{H}$, the following inequality \begin{equation}\label{Eq-Def-PDF}
    \sum_{1\leq i,j\leq k} \overline{p_i}\varphi(t_i-t_j)p_j\geq0
  \end{equation}
  is satisfied.
\end{definition}

Before giving a  formal definition for slice-condensed measures,  we  introduce some notations.  $\mathbb{H}_I$ denotes the set of all pure imaginary quaternions, and $\mathbb{R}^+$ the set of non-negative real number.
$\mathscr{B}(X)$ stands for the Borel $\sigma$-algebra on a topological space $X$. The function $\rho: \mathbb{H}_I\mapsto\mathbb{R}^+$ is defined as
$$\rho(x):=\lvert x\rvert . $$
Evidently, $\rho$ is Borel measurable, then induces a push-forward mapping:
$$\rho_*(\Gamma)(\Omega):=\Gamma(\rho^{-1}(\Omega)),\ \forall \text{ Borel measure }\Gamma \text{ on }\mathbb{H}_I, \ \forall\ \Omega \in\mathscr{B}(\mathbb{R}^+).$$

\begin{definition}\label{Def-PNM-1}
  A quaternion-valued regular Borel measure $\mu$ on $\mathbb{R}^+$ is said to be slice-condensed if there exists a non-negative finite regular Borel measure $\Gamma$ on $\mathbb{H}_I$ such that the following equality holds:
  $$\mu=\rho_*(\Gamma+\frac{x}{\lvert x\rvert }\Gamma).$$
\end{definition}
Here we stipulate that $\frac{x}{\lvert x\rvert }=0$ when $x=0$, and consider this function as a Radon-Nikodym derivative, which means $\frac{x}{\lvert x\rvert }\Gamma$ is  a regular Borel measure defined by
$$\frac{x}{\lvert x\rvert }\Gamma(\Omega):=\int_{x\in\Omega}\frac{x}{\lvert x\rvert }d\Gamma(x)$$
for any Borel set $\Omega\in\mathscr{B}(\mathbb{H}_I)$.

This concept  can also be defined equivalently as follows.
\begin{definition}\label{Def-PNM-2}
  A quaternion-valued regular Borel measure $\mu$ on $\mathbb{R}^+$ is said to be slice-condensed if there exists a spectral system $(E:\mathscr{B}(\mathbb{R}^+)\mapsto\mathcal{B}(\mathcal{H}),J)$ and a point $\alpha$ in a quaternionic Hilbert space $\mathcal{H}$ such that the following equality holds:
  $$\mu=\langle E\alpha,\alpha\rangle + \langle J_0E\alpha,\alpha\rangle,$$
  where $J_0=J-JE(\{0\})$ and $\langle\cdot,\cdot\rangle$ stands for the inner product on $\mathcal{H}$.
\end{definition}
More precisely, this equality $\mu=\langle E\alpha,\alpha\rangle + \langle J_0E\alpha,\alpha\rangle$ means
$$\mu(\omega)=\langle E(\omega)\alpha,\alpha\rangle + \langle J_0E(\omega)\alpha,\alpha\rangle$$
is satisfied for any $\omega\in\mathscr{B}(\mathbb{R}^+)$.

\begin{remark}
  We would like to emphasize that Definitions \ref{Def-PNM-1} and \ref{Def-PNM-2} are equivalent, and this assertion will be illuminated in Subsection \ref{Subs-Equa-Def-PNM}. To prevent confusion, one may ignore Definition \ref{Def-PNM-1} temporarily until  reach Subsection \ref{Subs-Equa-Def-PNM}.
\end{remark}

Let $\mathcal{M}_{S}(\mathbb{R}^+)$ denote the set of all slice-condensed regular Borel measures on $\mathbb{R}^+$.
The next theorem will show there exists a one to one correspondence between the continuous quaternionic positive definite functions and the slice-condensed regular Borel measures.
\begin{theorem}[Generalized Bochner's theorem]\label{Bochner-theorem}
If a quaternion-valued function $\varphi$ on $\mathbb{R}$ is continuous and positive definite, then there exists a unique $\mu\in\mathcal{M}_{S}(\mathbb{R}^+)$ such that
$$\varphi(t)=\int_{\mathbb{R}^+}\cos(tx)d\mathbf{Re}\mu(x)+\int_{\mathbb{R}^+}\sin(tx)d(\mu-\mathbf{Re}\mu)(x), $$
and vice versa.
\end{theorem}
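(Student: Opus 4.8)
The plan is to prove the forward implication by a Gelfand--Naimark--Segal type construction combined with the first generalized Stone theorem, and to obtain the converse by running the same machinery backwards. First I would manufacture a quaternionic Hilbert space out of $\varphi$. Let $V_0$ be the right $\mathbb{H}$-vector space of finitely supported functions $\mathbb{R}\to\mathbb{H}$, spanned by Dirac symbols $\delta_t$, and define a sesquilinear form by $\langle\delta_s,\delta_t\rangle:=\varphi(s-t)$ extended by right linearity. The inequality \eqref{Eq-Def-PDF}, applied after replacing the time points $t_i$ by $-t_i$ and using the Hermitian symmetry $\varphi(-t)=\overline{\varphi(t)}$ forced by \eqref{Eq-Def-PDF}, shows $\langle f,f\rangle\geq 0$ for all $f\in V_0$; quotienting by the null space and completing yields a quaternionic Hilbert space $\mathcal{H}$ with cyclic vector $\alpha:=[\delta_0]$. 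The translations $U(t)\delta_s:=\delta_{s+t}$ preserve the form, hence descend to unitaries with $U(0)=I$ and $U(s+t)=U(s)U(t)$, so $U(t)$ is a one-parameter unitary group with $\varphi(t)=\langle U(t)\alpha,\alpha\rangle$. Strong continuity at $0$ follows from $\|U(t)\alpha-\alpha\|^2=2\varphi(0)-2\mathbf{Re}\,\varphi(t)\to 0$, and then everywhere by the group law.

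Next I would apply Theorem \ref{Thm-Stone-I}: the generator $A$ is right linear anti-self-adjoint, $U(t)=e^{tA}|_S$, and $\sigma_S(A)\subset\mathbb{H}_I$, so by the spectral theorem (Theorem \ref{Thm-Spetral-Normal}) there is a spectral system $(E_j,J)$ on $\sigma_S(A)\cap\mathbb{C}_j^+$ with
\[
\langle U(t)\alpha,\alpha\rangle=\int_{\sigma_S(A)\cap\mathbb{C}_j^+}\cos(t|p|)\,d\langle E_j\alpha,\alpha\rangle(p)+\int_{\sigma_S(A)\cap\mathbb{C}_j^+}\sin(t|p|)\,d\langle JE_j\alpha,\alpha\rangle(p).
\]
Identifying $\sigma_S(A)\cap\mathbb{C}_j^+$ with $\mathbb{R}^+$ through $p\mapsto|p|$ transports $E_j$ to a resolution of the identity $E$ on $\mathbb{R}^+$, and with $J_0:=J-JE(\{0\})$ I would set $\mu:=\langle E\alpha,\alpha\rangle+\langle J_0E\alpha,\alpha\rangle$, which is slice-condensed by Definition \ref{Def-PNM-2}. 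Since each $E(\omega)$ is a self-adjoint projection, $\langle E\alpha,\alpha\rangle$ is a non-negative real measure equal to $\mathbf{Re}\,\mu$, while $J_0E(\omega)$ is anti-self-adjoint, so $\langle J_0E\alpha,\alpha\rangle$ is purely imaginary and equals $\mu-\mathbf{Re}\,\mu$. Because $\sin(t\cdot 0)=0$ the atom at the origin is invisible to the sine integral, so $\langle J_0E\alpha,\alpha\rangle$ may be replaced there by $\langle JE\alpha,\alpha\rangle$, and the display above reduces exactly to the asserted representation of $\varphi$.

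For uniqueness I would separate the representation into real and imaginary parts, $\mathbf{Re}\,\varphi(t)=\int_{\mathbb{R}^+}\cos(tx)\,d\mathbf{Re}\,\mu(x)$ and $\mathbf{Im}\,\varphi(t)=\int_{\mathbb{R}^+}\sin(tx)\,d(\mu-\mathbf{Re}\,\mu)(x)$. The first recovers the finite non-negative measure $\mathbf{Re}\,\mu$ from $\mathbf{Re}\,\varphi$ by uniqueness of the cosine transform, the second recovers $\mu-\mathbf{Re}\,\mu$ on $(0,\infty)$ by uniqueness of the sine transform, and slice-condensedness forces $(\mu-\mathbf{Re}\,\mu)(\{0\})=0$ (the factor $\tfrac{x}{|x|}$ vanishes at $x=0$), so $\mu$ is determined. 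For the converse, given a slice-condensed $\mu$ I would take the spectral system $(E,J)$ and point $\alpha$ furnished by Definition \ref{Def-PNM-2}, realize an anti-self-adjoint operator $A$ whose spectral system is the image of $(E,J)$ under $x\mapsto xj$, and put $U(t)=e^{tA}|_S$; by Lemma \ref{Lemma-exp-anti-self-adjoint} this is a strongly continuous one-parameter unitary group, and reversing the computation above yields $\varphi(t)=\langle U(t)\alpha,\alpha\rangle$. Continuity of $\varphi$ is then immediate from dominated convergence (finite $\mu$, bounded integrands), and positive definiteness follows from $\sum_{i,j}\overline{p_i}\varphi(t_i-t_j)p_j=\big\|\sum_j U(-t_j)\alpha p_j\big\|^2\geq 0$.

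I expect the main obstacle to be the passage between the slice-dependent spectral data $(E_j,J)$ living in $\mathbb{C}_j^+\cap\mathbb{H}_I$ and a single $\mathbb{R}^+$-measure $\mu$ that is genuinely slice-condensed and independent of the auxiliary unit $j$, together with the careful bookkeeping at the origin: the correction $J_0=J-JE(\{0\})$ is precisely what strips the spurious imaginary mass of $J$ on $\mathrm{Ker}\,A$, so that $\mu$ agrees with Definition \ref{Def-PNM-1}, even though that mass never appears in $\varphi$ because $\sin$ vanishes there.
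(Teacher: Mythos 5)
Your proposal is correct and follows essentially the same route as the paper: the GNS-type space built from $\varphi$ with the shift unitary group, Theorem \ref{Thm-Stone-I} plus the spectral system transported to $\mathbb{R}^+$, the measure $\mu=\langle E\alpha,\alpha\rangle+\langle J_0E\alpha,\alpha\rangle$ of Definition \ref{Def-PNM-2}, and transform/Stone--Weierstrass uniqueness --- this is precisely the paper's Theorem \ref{Thm-Spectral}, Corollary \ref{Cor-Spectral}, and the proof of Theorem \ref{Bochner-theorem} combined. The only cosmetic difference is that for the converse you reconstruct an anti-self-adjoint generator and the group $e^{tA}|_S$ from the given spectral system, whereas the paper applies the $*$-homomorphism furnished by Lemma 5.3 of \cite{Alpay-2016} directly to the functions $e^{tx}$.
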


\subsection{The Hilbert space associated with a positive definite function}\label{H-space-PDF}

Let $\varphi$ be a quaternionic positive definite function, and $F_0(\mathbb{R},\mathbb{H})$ be the family of quaternion valued functions on $\mathbb{R}$ with finite support. Evidently, $F_0(\mathbb{R},\mathbb{H})$ has a natural right $\mathbb{H}$-linear structure that makes it a right $\mathbb{H}$-vector space. The positive definite function $\varphi$ will induce a (possibly degenerate) inner product:
 $$\langle f,g\rangle:=\sum_{s,t\in\mathbb{R}}\overline{g(s)}\varphi(s-t)f(t), $$
for all $f,g\in F_0(\mathbb{R},\mathbb{H})$.

Quotienting  $F_0(\mathbb{R},\mathbb{H})$ by the subspace of functions with zero norm  eliminates the degeneracy. Then taking the completion gives a quaternionic Hilbert space $(\mathcal{H}_\varphi,\langle \cdot,\cdot\rangle)$.

Note that if $\varphi$ vanishes at the origin, it can be easily seen that $\mathcal{H}$ is a 0-dimensional space, and $\varphi\equiv 0$; then all the main results are trivially true. So without loss of generality, we can always assume $$\varphi(0)\neq 0. $$

Recall that every quaternionic Hilbert space can be transformed into a bilateral Hilbert space as follows:

1. Introduce a new inner product $(\cdot\mid \cdot)$, given by
$$(x\mid y)=\overline{\langle x,y\rangle}, \quad \forall x, y\in\mathcal{H}_\varphi.$$

2. Construct a left $\mathbb{H}$-linear structure on $\mathcal{H}_\varphi$: Let $\{x_a\}$ be an orthonormal basis, the left scalar multiplication is defined as
$$qx=\sum_{a}x_aq\langle x,x_a\rangle, \quad \forall q\in \mathbb{H}, \ x\in \mathcal{H}_\varphi. $$
Then the $\mathbb{H}$-vector space $\mathcal{H}_\varphi$ endowed with the inner product $(\cdot\mid \cdot)$ can be treated as a bilateral quaternionic Hilbert space.
For convenience, we denote the bilateral quaternionic Hilbert space $(\mathcal{H}_\varphi,(\cdot\mid \cdot))$ by $\tilde{\mathcal{H}}_\varphi$.

In this very case, we choose the orthonormal basis specifically given as
 $$\{x_a\}:=\{\delta / \|\delta\|\}\cup\{x_\beta\},$$
where $\delta$ is the finite delta function given by
$$\delta(x)=\left\{\begin{array}{ccc}
                     1,& & x=0; \\
                     0,& & x\neq 0.
                   \end{array}\right.$$ and  $\{x_\beta\}$ is an arbitrary orthonormal basis of $\{\delta / \|\delta\|\}^{\perp}$.
Such choice ensures the following commutativity:
\begin{equation}\label{Eq-Commute-delta}
q\delta=\delta q, \quad \forall \ q\in\mathbb{H}.
\end{equation}

\begin{remark}
Note that the left scaler multiplication on $\tilde{\mathcal{H}}_\varphi$ is different from the conventional left scaler multiplication. In other words, for a quaternion-valued function $f$ on $\mathbb{R}$ with finite support and an arbitrary element $q\in\mathbb{H}$, the following equality
$$(qf)(x)=qf(x), \qquad x\in\mathbb{R},$$
is generally non-valid in $\tilde{\mathcal{H}}_\varphi$. So the commutativity in \eqref{Eq-Commute-delta} is not trivial.
\end{remark}

\subsection{Spectral theorems for quaternionic positive definite functions}\label{Subs-Spectral-Thm}

\begin{theorem}\label{Thm-Spectral}
A quaternion-valued function $\varphi$ defined on $\mathbb{R}$  is continuous and positive definite if and only if there exist  a spectral system $(E:\mathscr{B}(\mathbb{R}^+)\mapsto\mathcal{B}(\mathcal{H}),J)$ and a point $\alpha$ in a quaternionic Hilbert space $\mathcal{H}$ such that
$$\varphi(t)=\int_{\mathbb{R}^+}\cos(tx)\ d\langle E\alpha,\alpha\rangle(x)+\int_{\mathbb{R}^+}\sin(tx)\ d\langle JE\alpha,\alpha\rangle(x). $$
\end{theorem}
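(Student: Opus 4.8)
The plan is to read the statement as a quaternionic Bochner theorem and to prove the two implications by the Gelfand--Naimark--Segal construction together with the generalized Stone theorem established above. For the necessity, suppose $\varphi$ is continuous and positive definite, and pass to the quaternionic Hilbert space $\mathcal{H}_\varphi$ built in Subsection~\ref{H-space-PDF}, with $\delta$ the cyclic vector coming from the delta function at the origin. On $F_0(\mathbb{R},\mathbb{H})$ I would define the translation family $(U(t)f)(x):=f(x+t)$; a change of summation index shows immediately that each $U(t)$ preserves $\langle\cdot,\cdot\rangle$, hence descends to the quotient and extends to a unitary operator on $\mathcal{H}_\varphi$, with $U(0)=I$ and $U(s)U(t)=U(s+t)$. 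Strong continuity reduces, by the group law and density of the translates of $\delta$, to continuity at $\delta$, where the identity $\|U(t)\delta-\delta\|^2=2\varphi(0)-\varphi(t)-\varphi(-t)$, together with $\varphi(-t)=\overline{\varphi(t)}$ and continuity of $\varphi$ at $0$, yields the claim. Theorem~\ref{Thm-Stone-I} then gives $U(t)=e^{tA}|_S$ for an anti-self-adjoint generator $A$ whose $S$-spectrum lies in $\mathbb{H}_I$, with spectral system $(E_j,J)$. Since $A$ is anti-self-adjoint, $\sigma_S(A)\cap\mathbb{C}_j^+=\{\lambda j:\lambda\ge 0\}$, which I identify with $\mathbb{R}^+$ through $\lambda j\mapsto|p|=\lambda$; transporting $E_j$ along this identification produces a resolution of the identity $E$ on $\mathscr{B}(\mathbb{R}^+)$ so that $(E,J)$ is a spectral system. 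Setting $\alpha:=\delta$ gives $\varphi(t)=\langle U(t)\alpha,\alpha\rangle$, and the functional-calculus formula of Theorem~\ref{Thm-Stone-I} (with $\mathbf{Re}(e^{tp})=\cos(t|p|)$ and $\mathbf{Im}(e^{tp})=\sin(t|p|)$) becomes, after the transport, the asserted representation.

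For the sufficiency, given a spectral system $(E,J)$ on $\mathbb{R}^+$ and a point $\alpha\in\mathcal{H}$, I would first realize $(E,J)$ as the spectral system of an anti-self-adjoint operator: pulling $E$ back to $\{\lambda j:\lambda\ge 0\}$ and setting $\langle Au,v\rangle:=\int_{\mathbb{R}^+}x\,d\langle JEu,v\rangle(x)$ defines, on its natural domain, a normal anti-self-adjoint $A$ whose spectral system is exactly $(E,J)$, by the uniqueness part of Theorem~\ref{Thm-Spetral-Normal}. By Lemma~\ref{Lemma-exp-anti-self-adjoint}, $U(t):=e^{tA}|_S$ is then a strongly continuous unitary group and $\langle U(t)\alpha,\alpha\rangle$ coincides with the integral defining $\varphi(t)$; continuity of $\varphi$ follows from strong continuity (or directly from dominated convergence, both measures being finite). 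Positive definiteness is the Gram computation: using $U(t_i-t_j)=U(t_i)U(t_j)^{*}$, unitarity, and the inner-product rules $\langle xp,y\rangle=\langle x,y\rangle p$ and $\langle x,yp\rangle=\overline p\langle x,y\rangle$, one obtains
$$\sum_{1\le i,j\le k}\overline{q_i}\,\varphi(t_i-t_j)\,q_j=\Big\|\sum_{j=1}^{k}U(t_j)^{*}\alpha\,q_j\Big\|^2\ge 0.$$

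The conceptual engine is cited (Stone's theorem and Lemma~\ref{Lemma-exp-anti-self-adjoint}), so the remaining difficulty is bookkeeping. In the necessity direction the two delicate points are the strong continuity of the translation group, which must be reduced to the cyclic vector and then extended by density, and the transport of the spectral measure from the axially symmetric slice $\sigma_S(A)\cap\mathbb{C}_j^+$ onto $\mathbb{R}^+$, where one must verify that the identification $\lambda j\mapsto|p|$ yields a genuine, $j$-independent resolution of the identity on $\mathbb{R}^+$ and correctly converts $e^{tp}$ into the pair $\cos(tx)$, $\sin(tx)$. In the sufficiency direction the only nonroutine step is the converse of the spectral theorem, namely that the prescribed $(E,J)$ is realized by some anti-self-adjoint $A$; once that is granted, Lemma~\ref{Lemma-exp-anti-self-adjoint} and the Gram identity conclude the argument. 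I also expect to spend care on the sign and conjugation conventions, in particular the choice $(U(t)f)(x)=f(x+t)$ and the relation $\varphi(-t)=\overline{\varphi(t)}$, so that the odd $\sin$-term and the $J$-part are matched correctly.
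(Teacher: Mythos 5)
Your proof is correct, and your necessity direction coincides with the paper's: the same GNS space $\mathcal{H}_\varphi$, the same shift group, Theorem~\ref{Thm-Stone-I}, and the same zero-extension transport of $E_j$ from $\sigma_S(A)\cap\mathbb{C}_j^+\subset j\mathbb{R}^+$ to $\mathbb{R}^+$ (your assertion that $\sigma_S(A)\cap\mathbb{C}_j^+$ \emph{equals} $\{\lambda j:\lambda\ge 0\}$ should be weakened to containment, and your explicit reduction of strong continuity to the cyclic vector via $\|U(t)\delta-\delta\|^2=2\varphi(0)-\varphi(t)-\varphi(-t)$ is a detail the paper merely asserts). Where you genuinely diverge is the sufficiency direction. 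The paper never builds an operator from the given data: it invokes Lemma 5.3 of the Alpay--Colombo--Kimsey paper, which says a spectral system $(E,J)$ directly induces a $*$-homomorphism $\mathbb{I}$ on bounded intrinsic slice functions, identifies $\varphi(t)=\langle\mathbb{I}(f_t)\alpha,\alpha\rangle$ for $f_t(x)=e^{tx}$, and runs the Gram computation with the bounded operators $\mathbb{I}(f_{-t_j})$ --- your computation with $U(t_j)^*$ is the same algebra. You instead prove a converse spectral theorem: define an unbounded anti-self-adjoint $A$ weakly by $\langle Au,v\rangle=\int_{\mathbb{R}^+}x\,d\langle JEu,v\rangle(x)$, identify its spectral system with $(E,J)$ by uniqueness in Theorem~\ref{Thm-Spetral-Normal}, and then use Lemma~\ref{Lemma-exp-anti-self-adjoint}. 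This works and keeps everything inside the Stone-theorem machinery of Section 3, but it costs you domain bookkeeping for an unbounded operator plus the realization step, which the paper's route bypasses entirely by staying at the level of the bounded functional calculus.

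One point in your realization step needs tightening: the operator $J$ in Theorem~\ref{Thm-Spetral-Normal} is not uniquely determined on $\mathrm{Ker}(A)=\mathrm{R}(E(\{0\}))$, so you cannot conclude that the spectral system of your constructed $A$ is \emph{exactly} $(E,J)$ --- only that its spectral measure is $E$ and its $J$-part agrees with $J$ off $E(\{0\})\mathcal{H}$. This is harmless for the theorem, since the integrand $\sin(tx)$ vanishes at $x=0$, so $\langle e^{tA}|_S\,\alpha,\alpha\rangle$ still reproduces the prescribed integral (the paper handles exactly this issue separately, in Corollary~\ref{Cor-Spectral}, by replacing $J$ with $J_0=J-JE(\{0\})$); but as stated your uniqueness claim is slightly too strong and should be qualified.
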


\begin{proof}

  First, we shall show the sufficiency. Assume that for a function $\varphi$, there exist  a spectral system $(E:\mathscr{B}(\mathbb{R}^+)\mapsto\mathcal{B}(\mathcal{H}),J)$ and a point $\alpha$ in a quaternionic Hilbert space $\mathcal{H}$ such that
$$\varphi(t)=\int_{\mathbb{R}^+}\cos(tx)\ d\langle E\alpha,\alpha\rangle(x)+\int_{\mathbb{R}^+}\sin(tx)\ d\langle JE\alpha,\alpha\rangle(x). $$
Obviously, $\varphi$ is continuous in view of the dominated convergence theorem.
We only need to check whether $\varphi$ is positive definite.

By Lemma 5.3 of \cite{Alpay-2016}, $\varphi(t)$ is identical with $\langle\mathbb{I}(f_t)\alpha,\alpha\rangle$ where $f_t(x):=e^{tx}$ and $\mathbb{I}$ is a $*$-homomorphism induced by the spectral system $(E,J)$.
For arbitrary $t_1,t_2,\cdots,t_k\in\mathbb{R}$  and  $q_1,q_2,\cdots,q_k\in\mathbb{H}$, we have
\begin{eqnarray*}
  \sum_{1\leq i,j\leq k} \overline{p_i}\varphi(t_i-t_j)p_j
   &=&\sum_{1\leq i,j\leq k} \overline{p_i}\langle\mathbb{I}(f_{t_i-t_j})\alpha,\alpha\rangle p_j \\
   &=& \sum_{1\leq i,j\leq k} \overline{p_i}\langle\mathbb{I}(f_{t_i})\mathbb{I}( f_{-t_j})\alpha,\alpha\rangle p_j\\
   &=& \sum_{1\leq i,j\leq k} \overline{p_i}\langle\mathbb{I}( f_{-t_j})\alpha,\mathbb{I}(f_{-t_i})\alpha\rangle p_j
\end{eqnarray*}
Subsequently, since $\langle xp,yq\rangle=\overline q \langle x,y\rangle p$ for all $p,q\in\mathbb{H}$ and all $x,y\in\mathcal{H}$, the following equality holds true:
\begin{eqnarray*}
  \sum_{1\leq i,j\leq k} \overline{p_i}\varphi(t_i-t_j)p_j
   &=& \sum_{1\leq i,j\leq k} \langle\mathbb{I}( f_{-t_j})\alpha p_j,\mathbb{I}(f_{-t_i})\alpha p_i\rangle  \\
   &=& \| \sum_{1\leq j\leq k}\mathbb{I}( f_{-t_j})\alpha p_j\|^2  \\
   &\geq&0
\end{eqnarray*}
Hence $\varphi$ is positive definite.

Next, we shall verify the necessity. The quaternionic Hilbert space $\mathcal{H}_\varphi$ constructed in Subsection \ref{H-space-PDF} will come into immediate use.

Assume that $\varphi:\mathbb{R}\mapsto\mathbb{H}$ is a continuous positive definite function.
Consider a family of shift operators $U_t$ $(t\in\mathbb{R})$ on $F_0(\mathbb{R},\mathbb{H})$ given by
\begin{equation}\label{Eq-Def-U_t}
  U_t(f)=f(\cdot+t).
\end{equation}
The following facts hold:

1. $U_t$ is a right $\mathbb{H}$-linear bijection preserving the (possibly degenerate) inner product $\langle\cdot,\cdot
\rangle$ induced by the positive definite function $\varphi$ for all $t\in\mathbb{R}$.

2. $U_0=I$ and $U_tU_s=U_{t+s}$ for all $t,s\in\mathbb{R}$.

3. The continuity of $\varphi$ implies that $\displaystyle{\lim_{s\to t}}\| U_s(f)-U_t(f)\|=0$ holds for all $f\in F_0(\mathbb{R},\mathbb{H})$ and all $t\in\mathbb{R}$.

Thus $U_t$ $(t\in\mathbb{R})$ can be uniquely extended  as a strongly continuous one-parameter unitary group on $\mathcal{H}_\varphi$. It follows directly from Theorem \ref{Thm-Stone-I} that
the infinitesimal generator, denoted by $A$, of $U_t$ is a right linear anti-self-adjoint operator, and
 \begin{equation}\label{Eq-Thm-Spectral-1}
   U_t=e^{tA}\rvert_S \ \text{for all} \ t\in \mathbb{R}.
 \end{equation}

Here $e^{tA}\rvert_S$ is defined by the functional calculus for the intrinsic slice function $e^{tx}$ on the S-spectrum of $A$. More precisely,
 \begin{equation}\label{Eq-Thm-Spectral-2}
 \begin{split}
    \langle e^{tA}\rvert_Sx,y\rangle
    = &  \int_{\sigma_S(A)\cap \mathbb{C}_j^+}
      \mathbf{Re}(e^{tp})d \langle E_jx,y\rangle (p)+\\
      & \int_{\sigma_S(A)\cap \mathbb{C}_j^+}
      \mathbf{Im}(e^{tp})d\langle JE_jx,y\rangle (p),
 \end{split}
 \end{equation}
where $(E_j:\mathscr{B}(\sigma_S(A)\cap \mathbb{C}_j^+)\mapsto\mathcal{B}(\mathcal{H}_\varphi),J)$ is the spectral system  associated with $A$. Moreover,
$$\mathbf{Re}(e^{tp})=\cos(t\lvert p\rvert )
  \ \text{ and }\
  \mathbf{Im}(e^{tp})=\sin(t\lvert p\rvert ),
$$
since  $\sigma_S(A)$ contains  only pure imaginary quaternions.

Define a resolution of identity $E:\mathscr{B}(\mathbb{R}^+)\mapsto\mathcal{B}(\mathcal{H}_\varphi)$ as
$$E(\omega)=E_j(\sigma_S(A)\cap \mathbb{C}_j^+ \cap j\omega), \qquad \omega\in\mathscr{B}(\mathbb{R}^+). $$
One may notice that $\sigma_S(A)\cap \mathbb{C}_j^+$ is in fact a subset of $j\mathbb{R}^+$. So $E$ is essentially a zero extension of $E_j$. Furthermore,  \eqref{Eq-Thm-Spectral-1} and \eqref{Eq-Thm-Spectral-2} indicate
\begin{equation}\label{Eq-Thm-Spectral-3}
  \langle U_t\delta,\delta \rangle =\int_{\mathbb{R}^+}
      \cos (tx) d \langle E\delta,\delta \rangle (x)+
     \int_{\mathbb{R}^+}
      \sin (tx) d\langle JE\delta,\delta\rangle (x),
\end{equation}
      where $\delta$ is the finite delta function given by
$$\delta(x)=\left\{\begin{array}{ccc}
                     1,& & x=0; \\
                     0,& & x\neq 0.
                   \end{array}\right.$$
A direct calculation yields the left side of \eqref{Eq-Thm-Spectral-3} is also equal to $\varphi(t)$. This completes the proof.
\end{proof}

\begin{corollary}\label{Cor-Spectral}
 A quaternion-valued function $\varphi$ defined on $\mathbb{R}$  is continuous and positive definite if and only if there exist  a spectral system $(E:\mathscr{B}(\mathbb{R}^+)\mapsto\mathcal{B}(\mathcal{H}),J)$ and a point $\alpha$ in a quaternionic Hilbert space $\mathcal{H}$ such that
$$\varphi(t)=\int_{\mathbb{R}^+}\cos(tx)\ d\langle E\alpha,\alpha\rangle(x)+\int_{\mathbb{R}^+}\sin(tx)\ d\langle J_0E\alpha,\alpha\rangle(x), $$
where $J_0=J-JE(\{0\})$.
\end{corollary}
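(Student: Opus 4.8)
The plan is to deduce Corollary \ref{Cor-Spectral} directly from Theorem \ref{Thm-Spectral} by showing that, for one and the same spectral system $(E,J)$ and the same point $\alpha$, the integral representation featuring $J$ and the one featuring $J_0=J-JE(\{0\})$ produce exactly the same function $\varphi$. Since the cosine terms in both representations involve only $E$ and are therefore literally identical, everything reduces to comparing the two sine integrals.

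First I would compute the relation between the measures $\langle J_0E\alpha,\alpha\rangle$ and $\langle JE\alpha,\alpha\rangle$. Using property~3 of a resolution of the identity (Definition \ref{Def-right-linear-ROI}), one has $E(\{0\})E(\omega)=E(\{0\}\cap\omega)$, so that $J_0E(\omega)=JE(\omega)-JE(\{0\}\cap\omega)$ and hence
$$\langle J_0E(\omega)\alpha,\alpha\rangle=\langle JE(\omega)\alpha,\alpha\rangle-\langle JE(\{0\}\cap\omega)\alpha,\alpha\rangle,\qquad \omega\in\mathscr{B}(\mathbb{R}^+).$$
The subtracted term $\omega\mapsto\langle JE(\{0\}\cap\omega)\alpha,\alpha\rangle$ vanishes whenever $0\notin\omega$, so it is a quaternion-valued measure concentrated at the single point $\{0\}$, an atom of mass $\langle JE(\{0\})\alpha,\alpha\rangle$. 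Thus $\langle J_0E\alpha,\alpha\rangle$ and $\langle JE\alpha,\alpha\rangle$ agree on every Borel subset of $(0,\infty)$ and differ only by this atom at the origin.

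The crux is then the elementary observation that the integrand $\sin(tx)$ vanishes at $x=0$ for every $t\in\mathbb{R}$. Consequently the atom at the origin contributes nothing to the sine integral, giving
$$\int_{\mathbb{R}^+}\sin(tx)\,d\langle J_0E\alpha,\alpha\rangle(x)=\int_{\mathbb{R}^+}\sin(tx)\,d\langle JE\alpha,\alpha\rangle(x),$$
and therefore the $J$-representation of Theorem \ref{Thm-Spectral} and the $J_0$-representation of the Corollary define identically the same $\varphi$. Both directions of the Corollary now follow at once: for necessity, apply Theorem \ref{Thm-Spectral} to obtain $(E,J,\alpha)$ realizing the $J$-representation, which by the display above equals the $J_0$-representation; for sufficiency, given the $J_0$-representation, rewrite it as the $J$-representation for the same data and invoke the sufficiency part of Theorem \ref{Thm-Spectral} to conclude that $\varphi$ is continuous and positive definite.

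I do not expect any genuine obstacle here; the argument is essentially bookkeeping of a single atom. The only point deserving care is the verification that $\omega\mapsto\langle JE(\{0\}\cap\omega)\alpha,\alpha\rangle$ is indeed supported on $\{0\}$, which is immediate, together with the remark that we never need $(E,J_0)$ itself to be a spectral system—$J_0$ enters only through the fixed spectral system $(E,J)$ supplied by Theorem \ref{Thm-Spectral}.
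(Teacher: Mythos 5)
Your proposal is correct and takes essentially the same route as the paper's own proof: both observe that $E(\{0\})E(\omega)=0$ when $0\notin\omega$, so the measures $\langle JE\alpha,\alpha\rangle$ and $\langle J_0E\alpha,\alpha\rangle$ agree on $(0,+\infty)$ and differ only by an atom at the origin, which is annihilated by the vanishing of $\sin(tx)$ at $x=0$, reducing the corollary to Theorem \ref{Thm-Spectral}. Your slightly more explicit treatment of the atom and of the two directions of the equivalence is just an expanded version of the paper's argument, not a different one.
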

\begin{proof}
  For any $\omega\in\mathscr{B}(\mathbb{R}^+)$ with $0\not\in\omega$, we have
  $$E(\{0\})E(\omega)=E(\{0\}\cap\omega)=E(\emptyset)=0. $$
  Hence $JE(\omega)=J_0E(\omega)$, which means $\langle JE(\omega)\alpha,\alpha\rangle=\langle J_0E(\omega)\alpha,\alpha\rangle$.
  Then we obtain
  $$\int_{\mathbb{R}^+}\sin(tx)\ d\langle JE\alpha,\alpha\rangle(x)=\int_{\mathbb{R}^+}\sin(tx)\ d\langle J_0E\alpha,\alpha\rangle(x),$$
  since the integrand $\sin(tx)$ vanishes at $x=0$ and these two measures $\langle JE\alpha,\alpha\rangle$ and $\langle J_0E\alpha,\alpha\rangle$ are identical on $(0,+\infty)$.
  Therefore, this corollary follows directly from Theorem \ref{Thm-Spectral}.
\end{proof}

\begin{proof}[Proof of Theorem \ref{Bochner-theorem}]

We only  verify the necessity. The sufficiency can be proved reversely.

Assume $\varphi:\mathbb{R}\mapsto\mathbb{H}$ is a continuous positive definite function. By Corollary \ref{Cor-Spectral}, there exist a spectral system $(E,J)$ and a point $\alpha$ in a quaternionic Hilbert space $\mathcal{H}$ such that
\begin{equation}\label{Eq-Thm-Bochner-1}
  \varphi(t)=\int_{\mathbb{R}^+}\cos(tx)\ d\langle E\alpha,\alpha\rangle(x)+\int_{\mathbb{R}^+}\sin(tx)\ d\langle J_0E\alpha,\alpha\rangle(x).
\end{equation}
Consider a regular Borel measure given by
$$\mu=\langle E\alpha,\alpha\rangle+\langle J_0E\alpha,\alpha\rangle. $$
By Definition \ref{Def-PNM-2}, i.e., the second definition of a slice-condensed measure, we know $\mu$ is slice-condensed.

Notice two facts:

1. $E(\omega)$ is self-adjoint for all $\omega\in\mathscr{B}(\mathbb{R}^+)$.

2. $J$  is anti-self-adjoint, and commutes with $E$.

Thus, $\langle E\alpha,\alpha\rangle$ is pure real valued and $\langle J_0E\alpha,\alpha\rangle$ is pure imaginary valued. This implies
$$\mathbf{Re}\mu=\langle E\alpha,\alpha\rangle.$$
Hence, \eqref{Eq-Thm-Bochner-1} indicates
$$\varphi(t)=\int_{\mathbb{R}^+}\cos(tx)d\mathbf{Re}\mu(x)+\int_{\mathbb{R}^+}\sin(tx)d(\mu-\mathbf{Re}\mu)(x).  $$

In addition, the uniqueness of the slice-condensed measure $\mu$ is a direct result of the Stone-Weierstrass theorem.

\end{proof}

\subsection{Equivalence between the two definitions of slice-condensed measures}\label{Subs-Equa-Def-PNM}
We claimed Definitions \ref{Def-PNM-1} and \ref{Def-PNM-2} are equivalent. Now this assertion will be verified.

First we recall some notations.
$\mathbb{H}_I$ denotes the set of pure imaginary quaternions, and $\mathbb{R}^+$ the set of non-negative real number. $\mathscr{B}(X)$ stands for the Borel $\sigma$-algebra on a topological space $X$. The function $\rho: \mathbb{H}_I\mapsto\mathbb{R}^+$ is defined as
$$\rho(x):=\lvert x\rvert . $$
It induces a push-forward mapping $\rho_*$ given by
\begin{equation}\label{Eq-Def-push-forward}
  \rho_*(\Gamma)(\Omega):=\Gamma(\rho^{-1}(\Omega)),
\end{equation}
for any Borel measure $\Gamma$ on $\mathbb{H}_I$, and any  $\Omega\in\mathscr{B}(\mathbb{R}^+)$.

Recall Definition \ref{Def-PNM-1}:

A quaternion-valued regular Borel measure $\mu$ on $\mathbb{R}^+$ is said to be slice-condensed if there exists a non-negative finite regular Borel measure $\Gamma$ on $\mathbb{H}_I$ such that the following equality holds:
  $$\mu=\rho_*(\Gamma+\frac{x}{\lvert x\rvert }\Gamma).$$
Here we stipulate that $\frac{x}{\lvert x\rvert }=0$ when $x=0$, and consider this function as a Radon-Nikodym derivative, which means $\frac{x}{\lvert x\rvert }\Gamma$ is  a regular Borel measure defined by
$$\frac{x}{\lvert x\rvert }\Gamma(\Omega):=\int_{x\in\Omega}\frac{x}{\lvert x\rvert }d\Gamma(x)$$
for any Borel set $\Omega\in\mathscr{B}(\mathbb{H}_I)$.

Recall Definition \ref{Def-PNM-2}:

A quaternion-valued regular Borel measure $\mu$ on $\mathbb{R}^+$ is said to be slice-condensed if
there exists a spectral system $(E:\mathscr{B}(\mathbb{R}^+)\mapsto\mathcal{B}(\mathcal{H}),J)$ and a point $\alpha$ in a quaternionic Hilbert space $\mathcal{H}$ such that the following equality holds:
  $$\mu=\langle E\alpha,\alpha\rangle + \langle J_0E\alpha,\alpha\rangle,$$
  where $J_0$ is given by $J_0=J-JE(\{0\})$ and $\langle\cdot,\cdot\rangle$ stands for the inner product on $\mathcal{H}$.

For ease of explanation, we call any measure that satisfies Definition \ref{Def-PNM-1} is a slice-condensed measure of type I, any measure that satisfies Definition \ref{Def-PNM-2} is a slice-condensed measure of type II.
\begin{lemma}\label{Lemma-Eq-PNM-1}
  Any slice-condensed measure of type I is a slice-condensed measure of type II.
\end{lemma}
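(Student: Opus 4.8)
The plan is to realize the type I datum $\Gamma$ concretely as an $L^2$-space and then to exhibit the spectral system and the vector $\alpha$ demanded by Definition \ref{Def-PNM-2} by hand. Given a non-negative finite regular Borel measure $\Gamma$ on $\mathbb{H}_I$, I would take $\mathcal{H}:=L^2(\mathbb{H}_I,\Gamma;\mathbb{H})$, the right quaternionic Hilbert space of $\mathbb{H}$-valued $\Gamma$-square-integrable functions, equipped with the inner product $\langle f,g\rangle=\int_{\mathbb{H}_I}\overline{g(x)}f(x)\,d\Gamma(x)$. One checks routinely that this obeys the axioms of a quaternionic inner product: the ordering $\overline{g}f$ together with the constancy of the scalars yields the required identities $\langle fp,g\rangle=\langle f,g\rangle p$ and $\langle f,gp\rangle=\overline{p}\langle f,g\rangle$. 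Since $\Gamma$ is finite, the constant function $\alpha\equiv 1$ lies in $\mathcal{H}$, and this will be our distinguished point.

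Next I would construct the two ingredients of the spectral system. For the resolution of identity, set $E(\omega):=$ multiplication by $\chi_{\rho^{-1}(\omega)}$ for $\omega\in\mathscr{B}(\mathbb{R}^+)$; being multiplication by a real indicator, each $E(\omega)$ is a right linear self-adjoint projection, and $E$ is seen to satisfy all five conditions of Definition \ref{Def-right-linear-ROI} (countable additivity holds in the strong operator sense by dominated convergence, and $E_{f,g}$ is the push-forward $\rho_*(\overline{g}f\,\Gamma)$, a quaternion-valued regular Borel measure). For the operator, I would fix once and for all a unit $n_0\in\mathbb{S}$ and define the pure-imaginary unit field $n(x):=x/|x|$ for $x\neq 0$ and $n(0):=n_0$; then let $J$ be the left multiplication $(Jf)(x):=n(x)f(x)$. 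The crux is to verify that $J$ is simultaneously right $\mathbb{H}$-linear (because the scalar $p$ multiplies on the right, $J(fp)=(Jf)p$), unitary (because $|n(x)|=1$), and anti-self-adjoint (because $\overline{n(x)}=-n(x)$ forces $\langle Jf,g\rangle=-\langle f,Jg\rangle$); moreover $J$ commutes with every $E(\omega)$ since $\chi_{\rho^{-1}(\omega)}$ is real-valued. Hence $(E,J)$ is a spectral system on $\mathbb{R}^+$ in the sense of Definition \ref{Def-FCS}.

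It then remains to compute the two measures in Definition \ref{Def-PNM-2} at $\alpha\equiv 1$. For the real part, $\langle E(\omega)\alpha,\alpha\rangle=\int_{\rho^{-1}(\omega)}d\Gamma=\Gamma(\rho^{-1}(\omega))=\rho_*(\Gamma)(\omega)$. For the imaginary part, observe that $J_0=J-JE(\{0\})$ is precisely multiplication by $n(x)\chi_{\{x\neq 0\}}$, which equals $x/|x|$ under the stipulation $x/|x|=0$ at the origin; consequently $\langle J_0E(\omega)\alpha,\alpha\rangle=\int_{\rho^{-1}(\omega)}\frac{x}{|x|}\,d\Gamma(x)=\big(\tfrac{x}{|x|}\Gamma\big)(\rho^{-1}(\omega))=\rho_*\big(\tfrac{x}{|x|}\Gamma\big)(\omega)$. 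Adding the two gives $\langle E\alpha,\alpha\rangle+\langle J_0E\alpha,\alpha\rangle=\rho_*(\Gamma)+\rho_*\big(\tfrac{x}{|x|}\Gamma\big)=\rho_*\big(\Gamma+\tfrac{x}{|x|}\Gamma\big)=\mu$, which is exactly the type II representation.

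The step that requires the most care is the treatment of the atom at $x=0$: there $x/|x|$ is undefined, so $J$ must be completed to a genuine unitary (I use the arbitrary choice $n_0$), and it is precisely the correction term $-JE(\{0\})$ inside $J_0$ that deletes this artificial contribution and reconciles the model with the convention $x/|x|=0$ built into Definition \ref{Def-PNM-1}. Beyond this, the only delicate bookkeeping is keeping track of left versus right quaternionic multiplication when showing that $J$ is right linear yet anti-self-adjoint; the remaining verifications are direct computations with the finite measure $\Gamma$.
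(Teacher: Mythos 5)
Your proof is correct and follows essentially the same route as the paper's: both realize $\Gamma$ on the quaternionic $L^2$-space over $\mathbb{H}_I$, take $E(\omega)$ to be multiplication by $\chi_{\rho^{-1}(\omega)}$ (the paper phrases this as pushing forward a resolution $E'$ on $\mathbb{H}_I$ via $\rho_*$, which is the same operator), take $J$ to be left multiplication by $x/|x|$ patched with an arbitrary imaginary unit at the origin, and evaluate at the constant function $\alpha\equiv 1$. Your explicit treatment of the atom at $0$ via $J_0=J-JE(\{0\})$ matches the paper's convention exactly, so nothing further is needed.
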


\begin{proof}
Assume $\mu$ is a slice-condensed measure of type I, then there exists a non-negative finite regular Borel measure $\Gamma$ on $\mathbb{H}_I$ such that the following equality holds:
  \begin{equation}\label{Eq-mu}\mu=\rho_*(\Gamma+\frac{x}{\lvert x\rvert }\Gamma)
  \end{equation}

Let $L^2(\mu,\mathbb{H})$ denote the quaternionic Hilbert space consisting of  Borel measurable quaternion-valued functions on $\mathbb{H}_I$ which are square-integrable with respect to the measure $\Gamma$. The inner product on $L^2(\mu,\mathbb{H})$ is naturally given by
$$
\langle f,g\rangle:=\int_{\mathbb{H}_I}\overline{g(x)} f(x) d\Gamma(x).
$$

Consider a resolution of identity $E'$ on $\mathbb{H}_I$ and an anti-self-adjoint unitary operator $J$ defined as follows:

$$E'(\omega)f(x):=\chi_\omega(x) f(x),$$
for all $\omega\in \mathscr{B}(\mathbb{H}_I)$, and all $f\in L^2(\mu,\mathbb{H})$. Here $\chi_\omega$ is the characteristic function of  the set $\omega$.

$$Jf(x):=\left\{\begin{array}{lll}
                 \frac{x}{\lvert x\rvert }f(x),& &x\neq 0;  \\
                 jf(0),& & x=0;
               \end{array}\right. $$
where $j$ is an arbitrary imaginary unit.
It can be easily seen that $E'$ commutes with $J$.

Applying the push-forward mapping $\rho_*$ given by \eqref{Eq-Def-push-forward} to $E'$ produces a resolution of identity $E$ on $\mathbb{R}^+$ as
$$\langle E f,g\rangle:=\rho_*(\langle E'f,g\rangle),\qquad \forall f,\ g\in L^2(\mu,\mathbb{H}). $$
Equivalently, the resolution of identity $E$ is defined by
$$E(\omega):=E'(\rho^{-1}(\omega)), \qquad \forall \omega\in \mathscr{B}(\mathbb{R}^+),$$
where $\rho(x)=\lvert x\rvert $, $x\in\mathbb{H}_I$.
We notice that $E$ also commutes with $J$. Thus $(E,J)$ is a spectral system according to Definition \ref{Def-FCS}.  Furthermore, direct calculations yield
$$\langle E(\omega)\alpha,\alpha\rangle=\rho_*\Gamma(\omega),$$
and
$$\langle J_0E(\omega)\alpha,\alpha\rangle=\rho_*(\frac{x}{\lvert x\rvert }\Gamma)(\omega),$$
for all $\omega\in \mathscr{B}(\mathbb{R}^+)$,
where $\alpha$ is given as the characteristic function of $\mathbb{H}_I$, and $J_0=J-JE(\{0\})$.
Substituting the two equalities above into \eqref{Eq-mu}, we thus obtain
 $$\mu=\langle E\alpha,\alpha\rangle + \langle J_0E\alpha,\alpha\rangle. $$
Therefore, $\mu$ is a slice-condensed measure of type II.
\end{proof}

\begin{lemma}\label{Lemma-Eq-PNM-2}
  Any slice-condensed measure of type II is a slice-condensed measure of type I.
\end{lemma}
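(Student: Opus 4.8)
The plan is to reverse the construction of Lemma \ref{Lemma-Eq-PNM-1}: starting from the spectral data $(E,J,\alpha)$ of a type II measure, I will manufacture a non-negative finite measure $\Gamma$ on $\mathbb{H}_I$ by spreading the real mass $\langle E\alpha,\alpha\rangle$ over the spheres $\{x\in\mathbb{H}_I:|x|=r\}$ and placing, on each such sphere, an antipodal pair of point masses whose barycentre realizes the prescribed imaginary part.

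First I would isolate the two scalar pieces. Set $\nu:=\langle E\alpha,\alpha\rangle$ and $\lambda_0:=\langle J_0E\alpha,\alpha\rangle$, so that $\mu=\nu+\lambda_0$. Since each $E(\omega)$ is a self-adjoint projection, $\nu(\omega)=\|E(\omega)\alpha\|^2\geq 0$, so $\nu$ is a non-negative finite regular measure with $\nu(\mathbb{R}^+)=\|\alpha\|^2$. Because $J$ is anti-self-adjoint and commutes with $E$, a short computation gives $(J_0E(\omega))^*=-J_0E(\omega)$, whence $\overline{\lambda_0(\omega)}=-\lambda_0(\omega)$ and $\lambda_0$ is pure imaginary valued. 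Consequently $\mathbf{Re}\mu=\nu$ and $\mu-\mathbf{Re}\mu=\lambda_0$, so matching the type I formula $\mu=\rho_*(\Gamma+\frac{x}{|x|}\Gamma)$ amounts to arranging $\rho_*\Gamma=\nu$ and $\rho_*(\frac{x}{|x|}\Gamma)=\lambda_0$.

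The decisive step, and the one I expect to be the main obstacle, is the comparison estimate $|\lambda_0(\omega)|\leq\nu(\omega)$ for every $\omega\in\mathscr{B}(\mathbb{R}^+)$. Writing $\omega'=\omega\setminus\{0\}$ one has $J_0E(\omega)=J(I-E(\{0\}))E(\omega)=JE(\omega')$, and since $E(\omega')$ is an idempotent commuting with $J$ one gets $JE(\omega')=E(\omega')JE(\omega')$; moving the self-adjoint factor across and applying Cauchy--Schwarz together with the unitarity of $J$ yields $|\lambda_0(\omega)|=|\langle JE(\omega')\alpha,E(\omega')\alpha\rangle|\leq\|E(\omega')\alpha\|^2=\nu(\omega')\leq\nu(\omega)$. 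Passing to total variation gives $|\lambda_0|\leq\nu$, hence $\lambda_0\ll\nu$; I then let $h:=d\lambda_0/d\nu$, which is a pure imaginary valued Borel function (derivative of a pure imaginary measure against a real one) satisfying $|h|\leq 1$ $\nu$-a.e.

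Finally I would build the lift. Fix a unit $j_0\in\mathbb{S}$ and define the Borel map $\sigma:\mathbb{R}^+\to\mathbb{H}_I$ by $\sigma(r)=r\,h(r)/|h(r)|$ when $h(r)\neq 0$ and $\sigma(r)=rj_0$ otherwise, so that $\rho\circ\sigma=\rho\circ(-\sigma)=\mathrm{id}$. Put $d\nu_\pm=\tfrac{1}{2}(1\pm|h|)\,d\nu$, both non-negative and finite since $|h|\leq 1$, and set
$$\Gamma:=\sigma_*\nu_+ + (-\sigma)_*\nu_-.$$
Then $\Gamma$ is a non-negative finite measure on $\mathbb{H}_I$, regularity being automatic since $\mathbb{H}_I$ is Polish, and $\rho\circ\sigma=\rho\circ(-\sigma)=\mathrm{id}$ gives $\rho_*\Gamma=\nu_++\nu_-=\nu$. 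For the imaginary part, pushing forward through $\sigma$ and $-\sigma$ and using $\sigma(r)/|\sigma(r)|=h(r)/|h(r)|$ and its negative collapses the two contributions to $\int_\Omega\frac{h}{|h|}\big(\tfrac{1+|h|}{2}-\tfrac{1-|h|}{2}\big)\,d\nu=\int_\Omega h\,d\nu=\lambda_0(\Omega)$, i.e. $\rho_*(\frac{x}{|x|}\Gamma)=\lambda_0$; the atom at the origin causes no trouble because of the convention $\frac{x}{|x|}=0$ at $x=0$. Adding the two identities yields $\mu=\rho_*(\Gamma+\frac{x}{|x|}\Gamma)$, so $\mu$ is slice-condensed of type I, which completes the proof.
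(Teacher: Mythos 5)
Your proof is correct, but it takes a genuinely different route from the paper's. The paper proves this lemma by going back through the analytic machinery: from the type II measure $\mu$ it forms the positive definite function $\varphi$ via Corollary \ref{Cor-Spectral}, builds the Hilbert space $\mathcal{H}_\varphi$ and the shift group $U_t$, invokes the \emph{second} generalized Stone theorem (Theorem \ref{Thm-Stone-II}, left-spectrum functional calculus on $\tilde{\mathcal{H}}_\varphi$), shows that the Ludkovsky measures $\mu_{i_0,i_k}[\delta,\delta]$ vanish for $k=1,2,3$ while $\mu_{i_0,i_0}[\delta,\delta]$ is non-negative and finite, and finally identifies $\mu$ with $\mu'=\rho_*(\Gamma+\frac{x}{|x|}\Gamma)$ by a Stone--Weierstrass uniqueness argument. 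You instead work entirely at the level of the measures: the decomposition $\mu=\nu+\lambda_0$ with $\nu=\langle E\alpha,\alpha\rangle\geq 0$ and $\lambda_0$ pure imaginary, the domination $|\lambda_0(\omega)|=|\langle JE(\omega')\alpha,E(\omega')\alpha\rangle|\leq\nu(\omega)$ via commutation, idempotence and Cauchy--Schwarz, the Radon--Nikodym derivative $h$ with $|h|\leq 1$, and the explicit antipodal lift $\Gamma=\sigma_*\nu_++(-\sigma)_*\nu_-$; your treatment of the atom at the origin ($\lambda_0(\{0\})=0$, consistent with the convention $\frac{x}{|x|}=0$ at $x=0$) is exactly the point that needs care, and you handle it. What your argument buys is economy and logical independence: it is purely measure-theoretic, never uses Theorem \ref{Thm-Stone-II} or the left-spectrum calculus at all, and even exhibits a concrete $\Gamma$ supported on two antipodal rays' worth of points. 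What the paper's longer route buys is precisely its stated theme -- demonstrating that the concrete description of slice-condensed measures is \emph{induced by} the second generalized Stone theorem -- so the two functional calculi are seen to produce matching spectral data; your proof shows that, for this equivalence alone, that machinery is dispensable.
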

\begin{proof}
This proof is lengthy. We would like to outline it, then give the details.

Let $\mu$ be a slice-condensed measure of type II. Corollary \ref{Cor-Spectral} indicates that the following  function
\begin{equation}\label{EQ-OULINE-varphi}
 \varphi(t):=\int_{\mathbb{R}^+}\cos(tx)d\mathbf{Re}\mu(x)+\int_{\mathbb{R}^+}\sin(tx)d(\mu-\mathbf{Re}\mu)(x),
\end{equation}
is continuous and positive definite. As shown in Subsection \ref{H-space-PDF}, there is a quaternionic Hilbert apace $\mathcal{H}_\varphi$ associated with $\varphi$.
Applying Theorem \ref{Thm-Stone-II} to the unitary group $U_t$ on $\mathcal{H}_\varphi$ given by \eqref{Eq-Def-U_t}, we obtain
  $$U_t=e^{tA}\rvert_L $$
with $A$ being the infinitesimal generator of $U(t)$.
Definition \ref{Def-FC-L-Spec}, along with the commutativity shown in \eqref{Eq-Commute-delta},  yields two facts:\\
1.
\begin{equation*}
\begin{split}
   (U_t \delta \mid \delta )=& \sum_{l=0}^{3}\int_{\sigma_L(A)}\cos t\lvert x\rvert  i_ld\mu_{i_0,i_l}[\delta,\delta](x) - \\
     & \sum_{k=1}^{3}\sum_{l=0}^{3}\int_{\sigma_L(A)}\frac{x_k}{\lvert x\rvert }\sin t\lvert x\rvert i_li_kd\mu_{i_0,i_l}[\delta,\delta](x),
\end{split}
\end{equation*}
where  $\delta$ is the finite delta function given by
$$\delta(x)=\left\{\begin{array}{ccc}
                     1,& & x=0; \\
                     0,& & x\neq 0;
                   \end{array}\right.$$
and $\mu_{i_0,i_l}[\delta,\delta]$ $(l=0,1,2,3)$ are regular Borel measures on $\sigma_L(A)$ determined by $A$ and $\delta$.\\
2.  $$\mu_{i_0,i_l}[\delta,\delta]\ \left\{\begin{array}{lcl}
                                             \text{is finite and non-negative,} &  & l=0, \\
                                             &  & \\
                                             =0 &  & l=1,2,3.\end{array} \right. $$
We thus have
\begin{equation}\label{Eq-OUTLINE-1}
   (U_t\delta\mid \delta)= \int_{\mathbb{H}_I}\cos t\lvert x\rvert d\Gamma(x)+ \int_{\mathbb{H}_I}\frac{x}{\lvert x\rvert }\sin t\lvert x\rvert d\Gamma(x),
\end{equation}
where  $\Gamma$ is a non-negative finite regular Borel measure  on $\mathbb{H}_I$, the set of pure imaginary quaternions, defined by
$$\Gamma(\omega):=\mu_{i_0,i_0}[\delta,\delta]\big(\sigma_L(A)\cap(-\omega)\big), \quad \forall\ \omega\in\mathscr{B}(\mathbb{H}_I). $$
By the definitions of $U_t$ and $\delta$, it is easy to see that the left side of \eqref{Eq-OUTLINE-1} equals $\varphi(t)$.
On the other hand,
a direct calculation yields the right side of \eqref{Eq-OUTLINE-1} equals  $$\int_{\mathbb{R}^+}\cos tx \ d\mathbf{Re}\mu'(x)+\int_{\mathbb{R}^+}\sin tx\  d(\mu'-\mathbf{Re}\mu')(x)$$
where $\mu'$ is a slice-condensed measure of type I given by $$\mu':=\rho_*(\Gamma+\frac{x}{\lvert x\rvert }\Gamma). $$
Hence, $$\varphi(t)=\int_{\mathbb{R}^+}\cos tx \ d\mathbf{Re}\mu'(x)+\int_{\mathbb{R}^+}\sin tx\  d(\mu'-\mathbf{Re}\mu')(x)$$
Comparing this equality with \eqref{EQ-OULINE-varphi} we conclude that the  slice-condensed measure $\mu$ of type II is identical with the slice-condensed measure $\mu'$ of type I. It completes the proof.

The details are as follows:

{\bf Step 1:}

Assume $\mu$ is a slice-condensed measure of type II. Then there exists a spectral system $(E:\mathscr{B}(\mathbb{R}^+)\mapsto\mathcal{B}(\mathcal{H}),J)$ and a point $\alpha$ in a quaternionic Hilbert space $\mathcal{H}$ such that the following equality holds:
  $$\mu=\langle E\alpha,\alpha\rangle + \langle J_0E\alpha,\alpha\rangle,$$
where $J_0=J-JE(\{0\})$. It can be seen easily that $\mathbf{Re}\mu(x)=\langle E\alpha,\alpha\rangle$, since $E$ is self-adjoint and $J_0$ is anti-self-adjoint.

Corollary \ref{Cor-Spectral} yields that  the function $\varphi$ given by  \begin{equation}\label{Eq-Def-varphi}
  \varphi(t):=\int_{\mathbb{R}^+}\cos(tx)d\mathbf{Re}\mu(x)+\int_{\mathbb{R}^+}\sin(tx)d(\mu-\mathbf{Re}\mu)(x),
\end{equation}
is continuous and positive definite.
As shown in Subsection \ref{H-space-PDF}, there is a quaternionic Hilbert apace $\mathcal{H}_\varphi$ associated with $\varphi$; moreover, $\mathcal{H}_\varphi$ can be transformed into a bilateral quaternionic Hilbert apace $\tilde{\mathcal{H}}_\varphi$.

In the proof of \ref{Thm-Spectral}, it has been illuminated that the family of operators $U_t$ $(t\in\mathbb{R})$ defined by \eqref{Eq-Def-U_t} is a strongly continuous one-parameter unitary group on $\mathcal{H}_\varphi$. It follows immediately from Theorem \ref{Thm-Stone-II} that
  $$U_t=e^{tA}\rvert_L \ \text{for all} \ t\in \mathbb{R}, $$
Here, $A$ is the infinitesimal generator of $U(t)$, and $e^{tA}\rvert_L$ is defined by the functional calculus for the function $e^{tx}$ on the left spectrum of $A$ in the bilateral Hilbert space $\tilde{\mathcal{H}}_\varphi$.

{\bf Step 2:}

Because $A$ is anti-self-adjoint, the left spectrum $\sigma_L(A)$ must be a subset of the pure imaginary space $\mathbb{H}_I:= i_1\mathbb{R}+ i_2\mathbb{R} + i_2\mathbb{R}$. Due to this fact, we know the function $e^{tx}$ can be composed as $$e^{tx}=\cos t\lvert x\rvert +i_1\frac{x_1}{\lvert x\rvert }\sin t\lvert x\rvert +i_2\frac{x_2}{\lvert x\rvert }\sin t\lvert x\rvert +i_3\frac{x_3}{\lvert x\rvert }\sin t\lvert x\rvert ,$$
for all $x=x_0+x_1i_1+x_2i_2+x_3i_3$ in the left spectrum. Definition \ref{Def-FC-L-Spec} thus leads to the following equality:
\begin{equation}\label{Eq-exp-L-1-1}
  (e^{tA}\rvert_L \delta \mid \delta )=\big(\phi(\cos t\lvert x\rvert )\delta\mid \delta\big)+\sum_{k=1}^{3}\big(i_k\phi(\frac{x_k}{\lvert x\rvert }\sin t\lvert x\rvert )\delta\mid \delta\big),
\end{equation}
where $\delta$ is the finite delta function given by
$$\delta(x)=\left\{\begin{array}{ccc}
                     1,& & x=0; \\
                     0,& & x\neq 0;
                   \end{array}\right.$$
since $\phi$ is a $\mathbb{H}$-algebra homomorphism from $\mathcal{B}(\sigma_L(A),\mathbb{H})$ to $\mathcal{B}_q(\tilde{\mathcal{H}}_\varphi)$.
Moreover, the definition of left scalar multiplication (one may refer to Subsection \ref{H-space-PDF} for more details) indicates that $L_{i_k}$, i.e., the left scalar multiplication by $i_k$ ($k=1,2,3$), is an anti-self-adjoint right $\mathbb{H}$-linear bounded operator. Hence, by shifting the position of $i_k$ in \eqref{Eq-exp-L-1-1} we have
$$(e^{tA}\rvert_L \delta \mid \delta )=\big(\phi(\cos t\lvert x\rvert )\delta\mid \delta\big)-\sum_{k=1}^{3}\big(\phi(\frac{x_k}{\lvert x\rvert }\sin t\lvert x\rvert )\delta\mid i_k\delta\big),$$
which, along with \eqref{Eq-Commute-delta}, implies
\begin{equation}\label{Eq-equivalence-PNM-exp-tA-1}
  (e^{tA}\rvert_L \delta \mid \delta )=\big(\phi(\cos t\lvert x\rvert )\delta\mid \delta\big)-\sum_{k=1}^{3}\big(\phi(\frac{x_k}{\lvert x\rvert }\sin t\lvert x\rvert )\delta\mid \delta\big)i_k,
\end{equation}

By Definition \ref{Def-FC-L-Spec} there exist regular $\mathbb{R}$-valued Borel measure $\mu_{i_v,i_l}[\alpha,\beta]$ $(v,l=0,1,2,3; \alpha,\beta\in\tilde{\mathcal{H}}_\varphi)$  such that
\begin{equation}\label{Eq-mu-expression}
(\phi(f)\alpha\mid \beta)=\sum_{v,l=0}^{3}\int_{\sigma_L(A)}f_vi_l\ d\mu_{i_v,i_l}[\alpha,\beta],
\end{equation}
where $f=\sum_{v=0}^{3}f_vi_v\in\mathcal{B}(\sigma_L(A),\mathbb{H})$ and $f_v$ is $\mathbb{R}$-valued. Applying \eqref{Eq-mu-expression}  to \eqref{Eq-equivalence-PNM-exp-tA-1} yields
\begin{equation}\label{Eq-equivalence-PNM-exp-tA-2}
\begin{split}
   (e^{tA}\rvert_L \delta \mid \delta )=& \sum_{l=0}^{3}\int_{\sigma_L(A)}\cos t\lvert x\rvert  i_ld\mu_{i_0,i_l}[\delta,\delta](x) \ - \\
     & \sum_{k=1}^{3}\sum_{l=0}^{3}\int_{\sigma_L(A)}\frac{x_k}{\lvert x\rvert }\sin t\lvert x\rvert i_li_kd\mu_{i_0,i_l}[\delta,\delta](x).
\end{split}
\end{equation}

{\bf Step 3:}

Since $\phi$ is an involution preserving $\mathbb{H}$-algebra homomorphism, any $\mathbb{R}$-valued bounded measurable function $f$ on $\sigma_L(A)$ satisfies the following equations:

\begin{equation*}
\begin{split}
(\phi(f)\delta\mid -i_k\delta)&=(i_k\phi(f)\delta\mid \delta) \\
                                      &=(\phi(i_kf)\delta\mid \delta)\\
                                      \\
(\phi(f)\delta\mid i_k\delta)&=(-i_k\phi(f)\delta\mid \delta)\\
                         &=(\phi(\overline{i_kf})\delta\mid \delta)\\
                         \\
\mathbf{Re}(\phi(i_kf)\delta\mid \delta)&=\mathbf{Re}(\delta\mid \phi(\overline{i_kf})\delta)\\
                                    &=\mathbf{Re}(\phi(\overline{i_kf})\delta\mid \delta)
\end{split}\qquad k=1,2,3.
\end{equation*}
It thus follows that
\begin{equation*}
\mathbf{Re}(\phi(f)\delta\mid -i_k\delta)=\mathbf{Re}(\phi(f)\delta\mid i_k\delta),\qquad k=1,2,3.
\end{equation*}
holds true. Applying \eqref{Eq-Commute-delta} to the equality above leads to
\begin{equation}\label{Eq-mu-v-l-vanish-1}
\mathbf{Re}\Big((\phi(f)\delta\mid \delta)(-i_k)\Big)=\mathbf{Re}\Big((\phi(f)\delta\mid \delta)i_k\Big),\qquad k=1,2,3.
\end{equation}
We combine \eqref{Eq-mu-expression} and \eqref{Eq-mu-v-l-vanish-1} to deduce that
$$\int_{\sigma_L(A)}f\ d\mu_{i_0,i_k}[\delta,\delta]=-\int_{\sigma_L(A)}f\ d\mu_{i_0,i_k}[\delta,\delta], \qquad k=1,2,3,$$
is valid for any $\mathbb{R}$-valued bounded measurable function $f$ on $\sigma_L(A)$.
Due to the randomness of $f$, we come to the vital result that
$$\mu_{i_0,i_k}[\delta,\delta]=0, \qquad k=1,2,3.$$
Moreover, the following equality $$\int_{\sigma_L(A)}f^2d\mu_{i_0,i_0}[\delta,\delta]=\mathbf{Re}(\phi(f^2)\delta\mid \delta)=\mathbf{Re}(\phi(f)\delta\mid \phi(f)\delta)\geq 0\ (\neq+\infty), $$
implies that $\mu_{i_0,i_0}[\delta,\delta]$ is  non-negative and finite.

Substituting the equalities above to \eqref{Eq-equivalence-PNM-exp-tA-2} yields \begin{equation}\label{Eq-equivalence-PNM-exp-tA-3}
 \begin{split}
   &(e^{tA}\rvert_L \delta \mid \delta )\\
   =&\int_{\sigma_L(A)}\cos t\lvert x\rvert  d\mu_{i_0,i_0}[\delta,\delta](x) - \int_{\sigma_L(A)}\frac{x}{\lvert x\rvert }\sin t\lvert x\rvert d\mu_{i_0,i_0}[\delta,\delta](x).
 \end{split}
\end{equation}

Set $\gamma$ to be the zero extension of $\mu_{i_0,i_0}[\delta,\delta]$ to the pure imaginary space $\mathbb{H}_I=\mathbb{R}i_1+\mathbb{R}i_2+\mathbb{R}i_3$. Let $\Gamma$ be a non-negative finite regular Borel measure  on $\mathbb{H}_I$ defined by
$$\Gamma(\omega):=\gamma(-\omega), \quad \forall\ \omega\in\mathscr{B}(\mathbb{H}_I). $$
Then \eqref{Eq-equivalence-PNM-exp-tA-3} can be rewritten as
\begin{equation*}
   (U_t\delta\mid \delta)=(e^{tA}\rvert_L \delta \mid \delta )= \int_{\mathbb{H}_I}\cos t\lvert x\rvert d\Gamma(x)+ \int_{\mathbb{H}_I}\frac{x}{\lvert x\rvert }\sin t\lvert x\rvert d\Gamma(x).
\end{equation*}
By the definition of $U_t$, we obtain
\begin{equation}\label{Eq-equivalence-PNM-exp-tA-4}
   \varphi(t)=(U_t\delta \mid \delta )= \int_{\mathbb{H}_I}\cos t\lvert x\rvert d\Gamma(x)+\int_{\mathbb{H}_I}\frac{x}{\lvert x\rvert }\sin t\lvert x\rvert d\Gamma(x).
\end{equation}

{\bf Step 4: }

Consider the slice-condensed measure $\mu'$ of type I given by $$\mu':=\rho_*(\Gamma+\frac{x}{\lvert x\rvert }\Gamma). $$

A direct calculation yields the right side of \eqref{Eq-equivalence-PNM-exp-tA-4} is equal to  $$\int_{\mathbb{R}^+}\cos tx \ d\mathbf{Re}\mu'(x)+\int_{\mathbb{R}^+}\sin tx\  d(\mu'-\mathbf{Re}\mu')(x)$$
Then according to the expression of $\varphi(t)$ in \eqref{Eq-Def-varphi}, we obtain
 $$\int_{\mathbb{R}^+}\cos tx \ d\mathbf{Re}\mu(x)=\int_{\mathbb{R}^+}\cos tx \ d\mathbf{Re}\mu'(x),$$
and
$$\int_{\mathbb{R}^+}\sin tx \ d(\mu-\mathbf{Re}\mu)(x)=\int_{\mathbb{R}^+}\sin tx\  d(\mu'-\mathbf{Re}\mu')(x), $$
for all $t\in\mathbb{R}$.
Hence, by the Stone-Weierstrass theorem, we have
$$\int_{\mathbb{R}^+}f \ d\mathbf{Re}\mu=\int_{\mathbb{R}^+} f\ d\mathbf{Re}\mu'$$
holds for any $f\in C_0(\mathbb{R}^+)$;
and
$$\int_{\mathbb{R}^+}f \ d(\mu-\mathbf{Re}\mu)=\int_{\mathbb{R}^+} f\ d(\mu'-\mathbf{Re}\mu')$$
holds for any $f\in C_0(\mathbb{R}^+)$ with $f(0)=0$.
Thus we know
$$\mathbf{Re}\mu=\mathbf{Re}\mu' \ \text{ on } \mathbb{R}^+,$$
and
$$\mu-\mathbf{Re}\mu=\mu'-\mathbf{Re}\mu' \ \text{ on } \mathbb{R}^+\setminus\{0\}.$$
Since $\mu$ and $\mu'$ are slice-condensed of type II and type I respectively, by definition $\mu-\mathbf{Re}\mu$ and $\mu'-\mathbf{Re}\mu'$ both vanish at the origin, namely,
$$\mu(\{0\})-\mathbf{Re}\mu(\{0\})=\mu'(\{0\})-\mathbf{Re}\mu'(\{0\})=0. $$
Therefore, we come to the final conclusion: $\mu$ is identical with $\mu'$  on $\mathbb{R}^+$.
In other words, any slice-condensed measure of type II is a slice-condensed measure of type I.
\end{proof}

Then the equivalence of Definitions \ref{Def-PNM-1} and \ref{Def-PNM-2}  follows immediately from Lemmas \ref{Lemma-Eq-PNM-1} and \ref{Lemma-Eq-PNM-2}.
\begin{theorem}
  Any slice-condensed measure of type I is a slice-condensed measure of type II, and vice versa.
\end{theorem}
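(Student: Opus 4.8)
The plan is to recognize that the asserted equivalence is nothing more than the conjunction of the two lemmas just established, so the proof consists of invoking them. Lemma \ref{Lemma-Eq-PNM-1} supplies the forward implication, that every slice-condensed measure of type I is of type II, and Lemma \ref{Lemma-Eq-PNM-2} supplies the reverse implication. Together they show that the two classes of measures coincide, which is exactly what the theorem asserts; no additional argument is required beyond citing these results.

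For completeness I would recall the substance of each half. The forward direction rests on the explicit model built in Lemma \ref{Lemma-Eq-PNM-1}: given the non-negative finite measure $\Gamma$ on $\mathbb{H}_I$, one works in $L^2(\Gamma,\mathbb{H})$, lets $E'$ be multiplication by characteristic functions and $J$ be multiplication by $x/|x|$ (with an arbitrary imaginary unit chosen at the origin), and pushes $E'$ forward along $\rho(x)=|x|$ to obtain a spectral system $(E,J)$ on $\mathbb{R}^+$. Taking $\alpha$ to be the constant function $1$ and computing $\langle E(\omega)\alpha,\alpha\rangle=\rho_*\Gamma(\omega)$ and $\langle J_0E(\omega)\alpha,\alpha\rangle=\rho_*(\tfrac{x}{|x|}\Gamma)(\omega)$ reproduces the type II representation of $\mu$.

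The reverse direction is the deeper one, carried out in Lemma \ref{Lemma-Eq-PNM-2}: from a type II datum one forms the continuous positive definite function $\varphi$ via Corollary \ref{Cor-Spectral}, passes to the associated space $\mathcal{H}_\varphi$, applies the second Stone theorem (Theorem \ref{Thm-Stone-II}) to express $U_t=e^{tA}|_L$, and reads off the scalar measures $\mu_{i_0,i_l}[\delta,\delta]$ from the left-spectrum functional calculus; a reflected zero-extension of $\mu_{i_0,i_0}[\delta,\delta]$ yields a measure $\Gamma$ and hence a type I measure $\mu'$, and a Stone--Weierstrass argument forces $\mu=\mu'$. Since both implications are already available, there is no genuine obstacle at the level of this theorem. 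If I had to identify the hardest ingredient of the whole equivalence it would lie in this reverse direction, namely the vanishing of the off-diagonal measures $\mu_{i_0,i_k}[\delta,\delta]$ for $k=1,2,3$, which hinges on the involution-preserving property of $\phi$ together with the commutativity $q\delta=\delta q$ guaranteed by the special choice of orthonormal basis in Subsection \ref{H-space-PDF}.
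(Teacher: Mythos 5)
Your proposal is correct and matches the paper exactly: the theorem is proved by simply combining Lemma \ref{Lemma-Eq-PNM-1} (type I implies type II) with Lemma \ref{Lemma-Eq-PNM-2} (type II implies type I), which is all the paper does. Your recap of the two lemmas' content, including the key role of the vanishing of $\mu_{i_0,i_k}[\delta,\delta]$ for $k=1,2,3$ in the reverse direction, is accurate but not needed for this theorem itself.
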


\section{An application to quaternionic random processes}\label{Sec-App}

In terms of applications, a growing popularity of quaternionic random processes has arisen in the field of signal processing (see, e.g. \cite{Buchholz-2008,Navarro-Moreno-2013,Took-2011}). In this section, we shall reveal some mathematical properties of a special family of quaternionic random processes via the  spectral analysis.

Let $\mathbb{E}(Y)$ denote the mean of a quaternionic random variable $Y$. The  covariance  $cov(Y_1,Y_2)$  of arbitrary quaternionic random variables $Y_1,Y_2$ is given as
$$cov(Y_1,Y_2):=\mathbb{E}(Y_1\overline{Y_2});$$
and the variance of $Y$ is defined as
$$var(Y):=cov(Y,Y). $$
One may refer to \cite{Navarro-Moreno-2013,Took-2011} for more basic notations.

\begin{definition}
  A quaternionic  process $X=\{X_t:t\geq 0\}$ is said to be weakly stationary if for all $t,s\geq 0$, and $h>0$, the following equalities hold:
  $$\mathbb{E}(X_t)=\mathbb{E}(X_s),$$
  and
  $$cov(X_t,X_s)=cov(X_{t+h},X_{s+h}). $$
\end{definition}

Thus, $X$ is weakly stationary if and only if it has a constant mean and its auto-covariance $cov(X_t,X_s)$  is a
function of $s-t$ only. For a weakly stationary process $X$, such function is called the auto-covariance function of $X$, and denoted by $c_X$. More precisely,
$$c_X(t):=\left\{\begin{array}{lcl}
                   cov(X_0,X_t), &  & t\geq 0; \\
                   &&\\
                   cov(X_{-t},X_0), & & t<0.
                 \end{array} \right.
$$

Auto-covariance functions have the following property.

\begin{theorem}\label{Thm-PD-WSP}
  Assume that $X$ is a weakly stationary quaternionic processes, then its auto-covariance function $c_X(t)$ is positive definite.
\end{theorem}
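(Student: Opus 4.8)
The plan is to verify the positive-definiteness inequality \eqref{Eq-Def-PDF} for $\varphi=c_X$ by exhibiting its left-hand side as the variance of a single quaternionic random variable. Fix $t_1,\dots,t_k\in\mathbb{R}$ and $p_1,\dots,p_k\in\mathbb{H}$; I must show $\sum_{i,j}\overline{p_i}\,c_X(t_i-t_j)\,p_j\ge 0$. Since $c_X(t_i-t_j)$ depends only on the differences $t_i-t_j$, I would first translate all indices into the range where the process is actually defined: choosing a constant $C\ge\max_i t_i$ and setting $s_i:=C-t_i\ge 0$, one has $s_j-s_i=t_i-t_j$, hence $c_X(t_i-t_j)=c_X(s_j-s_i)$. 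Weak stationarity then gives $c_X(s_j-s_i)=cov(X_{s_i},X_{s_j})=\mathbb{E}\!\left(X_{s_i}\overline{X_{s_j}}\right)$ for the genuine process values $X_{s_i},X_{s_j}$. Here I use that $c_X(\tau)=cov(X_a,X_{a+\tau})$ is independent of the base point $a\ge 0$ (with $a+\tau\ge0$), which follows from the defining relation $cov(X_t,X_s)=cov(X_{t+h},X_{s+h})$ together with the piecewise definition of $c_X$.

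The main computation is then to recognize the resulting double sum as a square. Because $\mathbb{E}$ is $\mathbb{R}$-linear and commutes with multiplication by the constant quaternions $\overline{p_i}$ and $p_j$ on either side, I can write
\begin{equation*}
\sum_{1\le i,j\le k}\overline{p_i}\,c_X(t_i-t_j)\,p_j
=\mathbb{E}\!\left(\sum_{1\le i,j\le k}\overline{p_i}\,X_{s_i}\,\overline{X_{s_j}}\,p_j\right).
\end{equation*}
Setting $Z:=\sum_{j=1}^{k}\overline{X_{s_j}}\,p_j$, so that $\overline{Z}=\sum_{i=1}^{k}\overline{p_i}\,X_{s_i}$, the integrand factors exactly as $\overline{Z}Z=|Z|^2$. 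Hence the sum equals $\mathbb{E}(|Z|^2)=var(Z)\ge 0$, which is the desired inequality \eqref{Eq-Def-PDF}. The facts that $c_X$ is well defined and quaternion-valued and that the shift leaves every difference unchanged are routine and would be dispatched quickly.

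I expect the delicate point to be an artifact of non-commutativity and index bookkeeping rather than anything analytic. First, the combination $Z$ must be chosen as $\sum_j\overline{X_{s_j}}\,p_j$, with the conjugate placed on $X_{s_j}$ and the scalar $p_j$ kept on the right; the naive choice $\sum_j X_{t_j}p_j$ would instead produce a middle block $p_j\overline{p_i}$ and fail to reproduce the form $\overline{p_i}(\cdots)p_j$ demanded by \eqref{Eq-Def-PDF}. Second, the covariance $cov(X_{s_i},X_{s_j})$ that arises naturally equals $c_X(s_j-s_i)$, i.e.\ $c_X$ of the \emph{negated} difference; this sign reversal is precisely what the substitution $s_i=C-t_i$ absorbs, since it simultaneously moves all times into $[0,\infty)$ and flips $s_j-s_i$ back into $t_i-t_j$. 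Keeping these two orderings mutually consistent is the only step that genuinely requires care.
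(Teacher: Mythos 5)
Your proof is correct and takes essentially the same route as the paper's: shift all times by a common constant so they land in $[0,\infty)$ with the differences reversed (the paper uses $t'_i=\max_l t_l - t_i$, you use $s_i=C-t_i$), invoke stationarity to write $c_X(t_i-t_j)=cov(X_{s_i},X_{s_j})$, and recognize the quadratic form as $\mathbb{E}(|Z|^2)=var(Z)\ge 0$. Your $Z=\sum_j\overline{X_{s_j}}\,p_j$ is just the conjugate of the paper's $Y=\sum_i\overline{p_i}X_{t'_i}$, so the two factorizations are identical.
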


\begin{proof}
For all $t_1,t_2,\cdots,t_k\in\mathbb{R}$  and all $p_1,p_2,\cdots,p_k\in\mathbb{H}$,
\begin{equation*}
\begin{split}
   \sum_{1\leq i,j\leq k} \overline{p_i}c_X(t_i-t_j)p_j&= \sum_{1\leq i,j\leq k} \overline{p_i}c_X(t'_j-t'_i)p_j \\
     & =\sum_{1\leq i,j\leq k} \overline{p_i}cov(X_{t'_i},X_{t'_j})p_j \\
     & =var(Y)\\
     & \geq 0,
\end{split}
\end{equation*}
where, $t'_i=-t_i+\displaystyle{\max_{1\leq l\leq k} \{t_l\}}$, and $Y:=\displaystyle{\sum_{i=1}^{k}\overline{p_i}X_{t'_i}}$.
\end{proof}

\begin{theorem}[Spectral theorem for auto-variance functions]\label{Thm-Spec-cov}
Assume $X$ is a weakly stationary quaternionic process, there exists a unique slice-condensed measure $\mu$ on $\mathbb{R}^+$ such that
\begin{equation*}
  c_X(t)=\int_{\mathbb{R}^+}\cos(tx)d\mathbf{Re}\mu(x)+\int_{\mathbb{R}^+}\sin(tx)d(\mu-\mathbf{Re}\mu)(x),
\end{equation*}
whenever the auto-variance function $c_X$ is continuous at the origin.
\end{theorem}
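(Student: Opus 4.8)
The plan is to deduce this spectral representation directly from the generalized Bochner theorem (Theorem~\ref{Bochner-theorem}), using Theorem~\ref{Thm-PD-WSP} to supply positive definiteness. The representation displayed here is word for word the conclusion of Theorem~\ref{Bochner-theorem} applied to $\varphi:=c_X$, and the uniqueness of $\mu$ is likewise inherited from that theorem. The only hypothesis of Theorem~\ref{Bochner-theorem} that is not handed to us is global continuity: we are told merely that $c_X$ is continuous at the origin. Hence the entire substance of the proof is to promote continuity at $0$ to continuity on all of $\mathbb{R}$, a quaternionic analogue of the classical fact that a positive definite function continuous at the origin is uniformly continuous everywhere.

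First I would invoke Theorem~\ref{Thm-PD-WSP} to record that $\varphi:=c_X$ is positive definite, and note that $\varphi(0)=var(X_0)=\mathbb{E}(|X_0|^2)$ is a nonnegative real number. Next I would bring in the semi-inner-product space $(F_0(\mathbb{R},\mathbb{H}),\langle\cdot,\cdot\rangle)$ of Subsection~\ref{H-space-PDF}, whose form $\langle f,g\rangle=\sum_{s,t}\overline{g(s)}\varphi(s-t)f(t)$ is positive semidefinite precisely because $\varphi$ is positive definite. Writing $\xi_t$ for the finite delta function supported at $t$, one has $\langle\xi_t,\xi_s\rangle=\varphi(s-t)$, and the conjugate symmetry $\langle\xi_t,\xi_s\rangle=\overline{\langle\xi_s,\xi_t\rangle}$ of the quaternionic form immediately yields $\varphi(-u)=\overline{\varphi(u)}$ for all $u\in\mathbb{R}$.

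The crux is then a single Cauchy--Schwarz estimate, valid for the positive semidefinite form even before quotienting out its null space. Since $\langle\xi_0,\xi_s-\xi_t\rangle=\varphi(s)-\varphi(t)$ and, using $\varphi(-u)=\overline{\varphi(u)}$ together with $\varphi(0)\in\mathbb{R}$, $\|\xi_s-\xi_t\|^2=2\varphi(0)-\varphi(t-s)-\varphi(s-t)=2\big(\varphi(0)-\mathbf{Re}\,\varphi(s-t)\big)$, the Cauchy--Schwarz inequality gives $|\varphi(s)-\varphi(t)|^2\le 2\varphi(0)\big(\varphi(0)-\mathbf{Re}\,\varphi(s-t)\big)$. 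Letting $s\to t$, continuity of $\varphi$ at the origin forces $\varphi(s-t)\to\varphi(0)$, so the right-hand side tends to $0$; this shows $c_X$ is uniformly continuous on $\mathbb{R}$. I expect this step to be the main obstacle, since it is where the noncommutativity of $\mathbb{H}$ must be handled with care: one must apply the conjugate symmetry and Cauchy--Schwarz of the quaternionic inner product with the correct slotwise scalar conventions, and keep track of the order of arguments in $\langle\xi_t,\xi_s\rangle=\varphi(s-t)$. With global continuity in hand, Theorem~\ref{Bochner-theorem} applies verbatim to $\varphi=c_X$ and delivers the unique slice-condensed measure $\mu$ together with the asserted formula.
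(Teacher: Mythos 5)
Your proposal is correct and is essentially the paper's own proof: invoke Theorem \ref{Thm-PD-WSP} for positive definiteness, upgrade continuity at the origin to continuity on all of $\mathbb{R}$ via a Cauchy--Schwarz estimate, then apply Theorem \ref{Bochner-theorem} verbatim. The only difference is where Cauchy--Schwarz is run: you carry it out in the semi-inner-product space $F_0(\mathbb{R},\mathbb{H})$ of Subsection \ref{H-space-PDF}, so the continuity upgrade is stated for an arbitrary positive definite function, whereas the paper applies it directly to the random variables $X_0$ and $X_{t+\Delta t}-X_t$; both arguments produce the numerically identical bound, since $c_X(\Delta t)+c_X(-\Delta t)=2\,\mathbf{Re}\,c_X(\Delta t)$ by the conjugate symmetry $c_X(-t)=\overline{c_X(t)}$ that each proof also records.
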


\begin{proof}
 This follows immediately from  Theorems \ref{Thm-PD-WSP} and \ref{Bochner-theorem}.  We need only demonstrate that $c_X$ is  continuous on $\mathbb{R}$.

 Without loss of generality, we assume $$\mathbb{E}(X_t)=0, t\geq 0. $$
 Then a simply application of  the Cauchy-Schwarz inequality yields:
 \begin{equation*}
   \begin{split}
      \lvert c_X(t+\Delta t)-c_X(t)\rvert & =\lvert \mathbb{E}\big(X_0\overline{(X_{t+\Delta t}-X_t)}\big)\rvert \\
        & \leq \sqrt{\mathbb{E}(\lvert X_0\rvert^2)\mathbb{E}(\lvert X_{t+\Delta t}-X_t\rvert^2) }\\
        & =    \sqrt{c(0)[2c(0)-c(\Delta t)-c(-\Delta t)]}
   \end{split}.
 \end{equation*}
 holds for all $t,t+\Delta t\geq 0$, which indicates that $c_X$ is continuous on $\mathbb{R}^+$.
 What's more, it can be easily seen that
 $$c_X(-t)=\mathbb{E}(X_t\overline{X_0})=\overline{\mathbb{E}(X_0\overline{X_t})}=\overline{c_X(t)}$$
 holds for all $t\geq 0$. Hence, $c_X$ is  continuous on $\mathbb{R}$.
\end{proof}

\begin{remark}
  The spectral theorem for auto-variance functions is also valid  for the class of wide-sense stationary quaternion random signals introduced by C. C. Took and D. P. Mandic \cite{Took-2011}, because wide-sense stationarity is stronger than weak stationarity.
\end{remark}

\section{Final remark}\label{Sec-Final}
What we achieve in this paper:

By generalizing Stone's theorem, we manage to reveal some vital spectral characteristics of quaternionic positive definite functions on the real line. We also find an application to weakly stationary quaternionic random processes.

\bigskip

What is the difference between our work and the earlier contributions on the same topic:

Firstly, We would like to give a brief and incomplete presentation of the earlier contribution in the simplest case:

\begin{theorem}\cite{Alpay-2016-1}
Fix $i,j\in\mathbb{S}$ such that $i,j,ij$ form a basis of $\mathbb{H}$. If a quaternion-valued function $\varphi$ on $\mathbb{R}$ is continuous and positive definite, then there exists a unique q-positive measure $\sigma$ on $\mathbb{R}$ such that
$$\varphi(t)=\int_{\mathbb{R}}e^{itx}d\sigma(x)$$
and vice versa.
\end{theorem}
Here, a quaternion-valued measure $\sigma=\sigma_1+\sigma_2j$ with $\sigma_k$ ($k=1,2$) being $\mathbb{C}_i$-valued is said q-positive with respect to $i\in\mathbb{S}$ if $\sigma_k$ ($k=1,2$) satisfy certain symmetric conditions (see Definition 4.1 in \cite{Alpay-2016-1} for more details).

Our main result claims that if a quaternion-valued function $\varphi$ on $\mathbb{R}$ is continuous and positive definite, then there exists a unique slice-condensed measure $\mu$ such that
$$\varphi(t)=\int_{\mathbb{R}^+}\cos(tx)d\mathbf{Re}\mu(x)+\int_{\mathbb{R}^+}\sin(tx)d(\mu-\mathbf{Re}\mu)(x), $$
and vice versa. Compare with the former, it can be easily see that our charaterization is independent with the choice of the imaginary unit $i\in \mathbb{S}$ but less intuitional.

Furthermore, the approach of the earlier contribution is mainly based on a matricial analogue of the Bochner-Minlos theorem as mentioned by D. Alpay and his coauthors, while our approach is based on the functional calculuses, especially the S-functional calculus of unbounded normal operators.

\bigskip

What is the connection between our work and the earlier contribution on the same topic:

The former work and our work together establish a one-to-one correspondence between q-positive measures and slice-condensed measures via quaternionic positive definite functions as intermediary. A direct calculation yields the correspondence: for any q-positive measure $\sigma=\sigma_1+\sigma_2j$ with respect to $i\in\mathbb{S}$, the following equality give a  slice-condensed measures $\mu$:
\begin{equation}\label{Eq-final-remark}
  \mu(x)=\sigma_1(x)+\sigma_1(-x)+i\sigma_1(x)-i\sigma_1(-x)+2i\sigma_2(x)j, \quad
   x\in \mathbb{R}^+.
\end{equation}
such that $\mu$ and $\sigma$ are both corresponding to one quaternionic positive definite function.

\bigskip

What we want to achieve in the future:

In the earlier contributions the quaternionic positive definite functions on locally compact abelian groups are  widely studied. Our work may provide a new perspective on this general setting. Based on what is already known about the simplest case, we can make  reasonable speculations about the general case. For example, one particular speculation is as follows:

Assume $G$ is a locally compact abelian group with Pontryagin dual $\hat{G}$. Then for any continuous quaternionic positive definite function $\varphi$ on $G$, there exists a (not necessarily unique) non-negative finite regular Borel measure $\Gamma$ on $\hat{G}\times\mathbb{S}$ such that
$$\varphi(x)=\int_{\hat{G}\times\mathbb{S}}\mathbf{Re}(\xi(x))+s\mathbf{Im}(\xi(x))d\Gamma(\xi,s). $$
Here $\mathbb S$ denotes the set of quaternionic imaginary units, $\mathbf{Re}$ and $\mathbf{Im}$ mean
extracting the real part and the imaginary part respectively.

This speculation, along with others, will be discussed in our future work.

We acknowledge the support from National Science Foundation for Youth (NSFY) of China (No. 12101094). 

Data availability statement: no data, model, or code were generated or used during the study.

\bigskip

\bibliographystyle{amsplain}

\end{document}